\newcommand{\comment}[1]{}
\numberwithin{equation}{section}
\newcommand{\bR}{{\mathbb R}}
\newcommand{\R}{{\mathbb R}}
\def\A{{\mathcal A}}
\def\H{{\mathcal H}}
\def\N{{\mathfrak N}}
\def\a{{\mathfrak a}}
\def\A{\texttt{A}}
\def\BMO{B\! M\! O}
\def\dst{\displaystyle}
\newtheorem{thm}{Theorem}[section]
\newtheorem{prop}[thm]{Proposition}
\newtheorem{cor}[thm]{Corollary}
\newtheorem{lem}[thm]{Lemma}
\newtheorem{defn}[thm]{Definition}
\newtheorem{remark}[thm]{Remark}
\def\div{{\mbox{\small\rm  div}}\,}
\def\curl{{\mbox{\small\rm  curl}}\,}
\begin{document}

\title[]{Atomic decomposition and weak factorization in 
generalized Hardy spaces of closed forms}

\author[A. Bonami]{Aline Bonami}
\address{F\'ed\'eration Denis Poisson, MAPMO CNRS-UMR 7349,
 Universit\'e d'Orl\'eans,
45067 Orl\'eans Cedex 2, France}
\email{{\tt aline.bonami@gmail.com}}
\author[J. Feuto]{Justin Feuto}
\address{Laboratoire de Math\'ematiques Fondamentales, UFR Math\'ematiques et Informatique, Universit\'e F\'elix Houphou\"et-Boigny (Abidjan), 22 B.P 1194 Abidjan 22. C\^ote d'Ivoire}
\email{{\tt justfeuto@yahoo.fr}}
\author[S. Grellier]{Sandrine Grellier}
\address{F\'ed\'eration Denis Poisson, MAPMO CNRS-UMR 7349,
 Universit\'e d'Orl\'eans,
45067 Orl\'eans Cedex 2, France} 
\email{{\tt Sandrine.Grellier@univ-orleans.fr}}
\author[L.D. Ky]{Luong Dang Ky}
\address {Department of Mathematics, University of Quy Nhon, 170 An Duong Vuong, Quy Nhon, Binh Dinh, Viet Nam}
\email{luongdangky@qnu.edu.vn}

\subjclass{32A37 47B35 47B10 46E22}
\keywords{Hardy spaces, Orlicz spaces, atomic decomposition, Musielack-Orlicz spaces, weak factorization.}
\thanks{}
\begin{abstract}
We give an atomic decomposition of closed forms on $\bR^n$, the coefficients of which belong to some Hardy space of Musielak-Orlicz type. These spaces are natural generalizations of weighted  Hardy-Orlicz spaces, when the Orlicz function depends on the space variable. One of them, called  $\mathcal H^{\rm log}$, appears naturally when considering products of functions in the Hardy space $\mathcal H^1$ and in $BMO$. As a main consequence of the atomic decomposition, we obtain a weak factorization of closed forms whose coefficients are in  $\mathcal H^{\rm log}$. Namely, a closed form in  $\mathcal H^{\rm log}$ is the infinite sum of the wedge product between an exact form in the Hardy space $\mathcal H^1$ and an exact form in $BMO$. The converse result, which generalizes the classical div-curl lemma, is a consequence of \cite{BGK}. As a corollary, we prove that the real-valued $\H^{\log}$ space can be weakly factorized.
\end{abstract}
\maketitle

\section{Introduction}
Let $\varphi$ be a $\mathcal C^\infty$ function with compact support on $\R^n$ such that $\int\varphi dx=1$ and let $\varphi_t$ denote the dilated function $\varphi_t(x)=t^{-n}\varphi(x/t)$. The Hardy space $\H^p(\R^n)$ is defined as the space of distributions $f$ such that the function
\begin{equation}
\label{f+}f^+:=\sup_{t>0} | f*\varphi_t|
\end{equation}
is in $L^p(\R^n)$. It is well-known, from the seminal work of Fefferman and Stein \cite{FeS}, not only that the definition does not depend on the particular function $\varphi$, but that  the Hardy space $\H^p(\R^n)$ can be characterized in terms of the area or the grand maximal function. Its characterization in terms   of the atomic decomposition  for $0<p\leq 1$ as initiated by Coifman in \cite{C} when $n=1$ and Latter in \cite{L}  when $n>1$, revealed also a fundamental tool in the theory of Hardy spaces. Hardy spaces have been recently generalized in the context of Musielak-Orlicz spaces, first by the fourth author who proved the atomic decomposition \cite{Ky}, then by Dachun Yang et al. in \cite{HYY, LHY}, where other equivalent properties are proved. For the definition of these new spaces,   the Orlicz function $t^p$ is replaced by a function $\wp(x, t)$, which belongs, as a function of $x$ and uniformly in $t$, to the class of weights $A_\infty$, while, as a function of $t$ and uniformly in $x$, it belongs to the class of growth functions that had been introduced by Janson in \cite{J} to define Hardy-Orlicz  spaces. 

The Musielak-Orlicz-type space $L^{\wp}(\R^n)$ is the space of all Lebesgue measurable functions $f$ such that $$\int_{\mathbb R^{n}}\wp\left(x,\frac{\left|f(x)\right|}{\lambda}\right)dx<\infty$$ for some $\lambda>0$, with the Luxembourg (quasi-)norm
\begin{equation}\label{normeMusielackOrlicz}
\displaystyle \left\|f\right\|_{L^{\wp}}:=\inf\left\{\lambda>0:\int_{\mathbb R^{n}}\wp\left(x,\frac{\left|f(x)\right|}{\lambda}\right)dx\leq1\right\}.
\end{equation}
The Hardy space of Musielak-Orlicz type $\H^{\wp}(\R^n)$ consists of tempered distributions $f$ for which $f^+$ belongs to $L^{\wp}(\R^n)$. A typical example of such a function $\wp$ is given by
\begin{equation}\label{orlicz}
\theta(x,t):=\frac t{\log(e+|x|)+\log(e+t)}.
\end{equation} 
The corresponding Hardy space of Musielak-Orlicz type is denoted by $\mathcal H^{\rm log}(\R^n)$. This space appears naturally when considering products of  two functions, one in $\mathcal H^1(\R^n)$ and one in $BMO(\R^n)$. Namely, the product may be written as the sum of an integrable function and a distribution in   $\mathcal H^{\rm log}(\R^n)$ (see  \cite{BGK}). We will see that there is some kind of converse: all functions in $L^1(\R^n)+\H^{\log}(\R^n)$ may be written as the infinite sum of products of two functions, one in $\mathcal H^1(\R^n)$ and one in $BMO(\R^n)$. It is simple to write $L^1$ functions as the sum of such products. Indeed, we can restrict to functions $f$ such that $f=\sum \lambda_j \chi_{Q_j}$  where $Q_j$  are disjoint cubes. The characteristic function $\chi_{Q_j}$ can be written as $u_j^2$, so that $u_j$ has zero mean. Then $|Q_j|^{-1}u_j$ is an atom of $\H^1(\R^n)$ and
$$\sum_j \|u_j\|_{\H^1(\R^n)} \|v_j\|_{L^{\infty}(\R^n)}\leq C \|f\|_{L^1(\R^n)}.$$
We will concentrate here on the $\H^{\log}$ part.
\smallskip

 Before going back to real valued functions we study functions whose values are closed forms. We first define the   Hardy spaces of Musielak-Orlicz type for functions taking their values in the space of differential forms of a given degree. It is straightforward that the expression of  products of factors repectively in  $\mathcal H^1(\R^n)$ and  $BMO(\R^n)$ as a sum    extends to wedge products. But one can say more when one restricts to closed forms: in this case the cancellation properties imply that there is no integrable term. More precisely, the wedge product of two closed forms, respectively in $\H^{1}_{d}(\mathbb R^{n},\Lambda^{\ell})$ and $\BMO_d(\bR^n, \Lambda^m)$,  belongs to $\H^{\log}_{d}(\mathbb R^{n}, \Lambda^{\ell+m})$, with $1\leq \ell, m\leq d-1$. The index $d$ means that we restrict to closed forms. This may be seen as an $\H^1-BMO$ version of the generalized div-curl lemmas for wedge products of closed forms that have been obtained by Lou and McIntosh (see \cite{LM1}): they prove that, for $L^2$ data the wedge product has its coefficients in the Hardy space $\H^1(\R^n)$. A weaker $\H^1-BMO$ statement had been obtained  in \cite{BFG}.
\smallskip

The proof of this $\H^1-BMO$ generalized div-curl lemma for closed forms follows closely  what  was done in \cite{BGK} in the particular case of   the scalar product of divergence and curl free vector fields with coefficients respectively in $\H^1(\bR^n)$ and $BMO(\bR^n)$.  The main motivation of this paper concerns the converse, which is new even in the case of scalar products of divergence and curl free vector fields.
 More precisely, a closed form in  $\mathcal H^{\rm log}$ is the infinite sum of  wedge products between a closed form in the Hardy space $\mathcal H^1$ and a closed form in $BMO$, with a control on the norms. Restricting to $n$ forms which identify to real-valued functions, we obtain the  weak factorization that we mentioned earlier. 
 \begin{thm}\label{main3}
Let $n>1$. Any $f\in\H^{\log}(\bR^{n})$ can be written as
\begin{equation*}
f=\sum^{\infty}_{k=0}u_{k} v_{k}\qquad \qquad \text{ in the sense of distributions}
\end{equation*}
with $u_{k}\in \H^1(\bR^{n})$ and $v_{k}\in BMO(\bR^{n})$ and
\begin{equation}
\sum_k \|u_{k}\|_{\H^1(\bR^{n})}\| v_{k}\|_{BMO^+(\bR^{n})}\lesssim \|f\|_{\H^{\log}(\bR^{n})}.
\end{equation}
\end{thm}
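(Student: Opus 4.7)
The plan is to derive this real-valued factorization as a direct corollary of the factorization for closed $\H^{\log}$ forms (the main result of the paper, described in the introduction). The key observation is that a real-valued function on $\bR^n$ is naturally identified with a top-degree form, which is automatically closed, so the machinery for closed forms applies with no additional structural hypothesis on $f$.

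The concrete steps are as follows. First, identify $f$ with the $n$-form $\tilde f := f\, dx_1\wedge\cdots\wedge dx_n$, which is trivially closed and satisfies $\|\tilde f\|_{\H^{\log}_d(\bR^n,\Lambda^n)}\asymp \|f\|_{\H^{\log}(\bR^n)}$ by the coefficient-wise definition of the form-valued Hardy space. Second, invoke the factorization theorem for closed $\H^{\log}$ forms in bi-degree $(\ell, m)=(1, n-1)$, which is admissible exactly because $n>1$, to write
\begin{equation*}
\tilde f = \sum_k \alpha_k\wedge \beta_k, \qquad \sum_k \|\alpha_k\|_{\H^1_d(\bR^n,\Lambda^1)}\,\|\beta_k\|_{\BMO_d(\bR^n,\Lambda^{n-1})} \lesssim \|f\|_{\H^{\log}(\bR^n)},
\end{equation*}
with $\alpha_k$ closed $1$-forms whose coefficients lie in $\H^1(\bR^n)$ and $\beta_k$ closed $(n-1)$-forms whose coefficients lie in $BMO(\bR^n)$. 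Third, expand in coordinates. Writing $\alpha_k = \sum_i a_{k,i}\,dx_i$ and $\beta_k = \sum_i b_{k,i}\,\widehat{dx_i}$, where $\widehat{dx_i} := dx_1\wedge\cdots\wedge dx_{i-1}\wedge dx_{i+1}\wedge\cdots\wedge dx_n$, every cross term vanishes and one finds
\begin{equation*}
\alpha_k\wedge \beta_k = \Big(\sum_{i=1}^n (-1)^{i-1}\, a_{k,i}\, b_{k,i}\Big)\, dx_1\wedge\cdots\wedge dx_n.
\end{equation*}
Consequently $f = \sum_k \sum_{i=1}^n (-1)^{i-1} a_{k,i}\, b_{k,i}$ in the sense of distributions, and relabelling the double index $(k,i)$ as a single index $k'$ produces the representation $f=\sum_{k'} u_{k'} v_{k'}$ claimed in the theorem. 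The norm estimate is inherited from the one above, since the $\H^1$- (respectively $BMO$-) norm of each scalar coefficient of a form is controlled by the norm of the ambient form, at the cost of an absolute constant depending only on $n$.

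The only genuine subtlety I foresee is the passage from the usual $BMO$ seminorm (appropriate for forms, defined modulo constants) to the norm $BMO^+$ appearing in the statement: extracting scalar coefficients of the $\beta_k$ controls oscillations but not the $L^1_{\rm loc}$ averages that pin down the additive constant. Since every closed $(n-1)$-form on $\bR^n$ is exact, one can fix primitives and normalize the constants coming out of the closed-form factorization so that the $BMO^+$-control is retained; this normalization is naturally part of the statement of the main closed-form factorization invoked at the second step, and should therefore cause no additional difficulty.
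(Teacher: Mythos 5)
Your proposal is correct and is essentially the paper's own route: the authors also obtain Theorem \ref{main3} by identifying $f$ with a (necessarily closed) $n$-form, applying the closed-form factorization (Theorem \ref{main2}) in bi-degree $(1,n-1)$ — this is exactly the div/curl corollary at the end of Section 5 — and expanding each wedge product $\alpha_k\wedge\beta_k$ into the $n$ scalar products of coefficients. Your final worry about the additive constants is moot, since Theorem \ref{main2} is already stated with control of $\|v_k\|_{BMO^+}$ (not merely the $BMO$ seminorm), so the scalar coefficients inherit the $BMO^+$ bound directly.
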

Indeed, we prove that any atom can be written as a sum of $n$ scalar products $F\cdot G$ with $F$ which is $\div-$free with coefficients in $\H^{\log}(\R^n)$ and $G$ which is $\curl-$free with coefficients in $BMO(\R^n)$. This implies that each atom can be written as a sum of $n^2$ products $uv$, with $u\in \H^1(\bR^{n})$, $v\in BMO(\bR^{n})$ and $\|u\|_{\H^1(\bR^{n})}\| v\|_{BMO^+(\bR^{n})}$ uniformly bounded.

\begin{remark} For $n=1$ one has a better result. Indeed, we know that holomorphic functions of the space $\H^{\log}(\R^2_+)$ may be written as the product of holomorphic functions in $\H^1(\R^2_+)$ and  $BMOA(\R^2_+)$, see \cite{BK}. So each real valued function of $\H^{\log}(\R)$ is the sum of two products of functions, which are respectively in $\H^1 (\bR)$ and $BMO(\bR)$. 
\end{remark}

Theorem \ref{main3} gives a ``weak" answer to a question of \cite{BIJZ} on the surjectivity of the product as a map from $\H^1(\bR^{n})\times BMO(\bR^{n})$ to $L^1(\bR^{n})+\H^{\log}(\bR^n)$ (at that time the question concerned a larger space, for which one could not get a positive answer).
 
 \smallskip
 
 Again, the $L^2$ version of weak factorization for forms has been proved by Lou and McIntosh in \cite{LM1} and we follow the structure  of their proof. We first establish an atomic decomposition of distributions of all Hardy spaces of Musielak-Orlicz type $\H^{\wp}_{d}(\mathbb R^{n}, \Lambda^{\ell})$. Since the construction of \cite{Ky} does not preserve closed forms, we use the scheme of \cite{CMS}, already used by \cite{AMR} in the context of closed forms and Hardy spaces $\H^1$. Because of the possibility to use an atomic  decomposition, our main result deals with the way to write an atom of the space $\H_d^{\rm log}$  as a wedge product. The main step here is to find the $BMO$ factor corresponding to the size and position of the support of each atom.
 \smallskip

It should be emphazised that, as in \cite{BIJZ, BGK}, one of the main difficulties here is the fact that we deal with functions in $BMO$ and not equivalent classes modulo constants, since we are interested with ordinary products or wedge products. So, the boundedness inequalities do not only involve the $BMO$ semi-norm, but the norm
\begin{equation}\label{bmo+}
 \|f\|_{BMO^+}:=\int_{(0, 1)^n} |f(x)|dx +\sup_B\frac{1}{|B|}\int_B|f(x)-f_B|dx.
 \end{equation}
 Here the supremum is taken on all balls $B$ in $\R^n$ and $f_B$ denotes the mean of $f$ on $B$. The $BMO$ norm of (the equivalence class of) $f$ is the second term.
 \bigskip

The paper is organized as follows. Section 2 is devoted to atomic decompositions. In Section 3 we prove the generalized div-curl lemma and its converse, that is, weak factorization for the space $\H^{\log}$ of closed forms. 
\smallskip

Throughout this paper, we will denote by $C$, constants that depend only on the dimension $n$ and the growth function $\wp$. We use the standard notation $A\lesssim B$ to denote $A\leq CB$ for some $C$. And $A\sim B$ means that $A\lesssim B$ and $B\lesssim A.$ 
 If $E$ is a measurable subset of $\bR^{n}$, then $\left|E\right|$ stands for its Lebesgue measure.

\section{Prerequisites and atomic decomposition}

Prerequisites essentially rely on  \cite{Ky}, and also \cite {LHY,HYY}. We first give the required properties for the Orlicz-Musielak functions.

\subsection{Prerequisites on growth functions}
Let  $\wp:\mathbb R^{n}\times\left[0,\infty\right)\rightarrow\left[0,\infty\right)$ be a  Lebesgue measurable function. 
We say that $\wp$ 
   is of uniformly upper type $m$ (resp., lower type $m$) if there exists a positive constant $C$ such that 
\begin{equation*}
\wp(x,st)\leq C t^{m}\wp(x,s)%\label{control}
\end{equation*}
for every $t\geq 1$ (resp., $0<t\leq 1$).
Moreover we say that $\wp$ is of finite uniformly upper type (resp., positive lower type), if it is of  uniformly upper type for some $m<\infty$ (resp. lower type for some $m>0$).

For such a function $\wp$, we define the quantity 
$$\begin{array}{lll}
i(\wp)&:=&\sup\left\{m\in\mathbb R:\wp \text{ is of uniformly lower type }m\right\}.
\end{array}
$$

Let  $1< q<\infty$. We say that $\wp$ satisfies the uniformly Muckenhoupt condition $\mathbb A_{q}$ if there exists $C>0$ such that
 \begin{equation*}
 \frac{1}{\left|B\right|}\int_{B}\wp(x,t)dx\left(\frac{1}{\left|B\right|}\int_{B}\wp(x,t)^{-1/(q-1)}dx\right)^{q-1}\leq C
 \end{equation*}
for all balls $B\subset \R^n$ and $t\in [0,\infty)$, that is,  the functions $\wp(\cdot, t)$ belong uniformly to  the classical Muckenhoupt class $A_q$. 
% \begin{equation*}
% \frac{1}{\left|B\right|}\int_{B}\wp(x,t)dx\leq C\mathrm{ess}-\inf_{x\in B}\wp(x,t),\text{ if } q=1,
% \end{equation*}
% for all $t>0$ and all balls $B\subset\mathbb R^{n}$,
 We say that  $\wp$  is in $\mathbb A_{\infty}$ if there exists $1< q<\infty$ such that $\wp\in\mathbb A_{q}$. In this case, we also put 
 $$q(\wp):=\inf\left\{q> 1:\wp\in\mathbb A_{q}\right\}.$$
% \end{itemize}
\begin{defn}\label{growthFunction}
A measurable function $\wp: \bR^n\times\left[0,\infty\right)\rightarrow\left[0,\infty\right)$ is a growth function if it satisfies the three conditions
 \begin{enumerate}
 \item $\wp$ is a Musielak-Orlicz function, that is,
 the function $$\wp(x,\cdot):\left[0,\infty\right)\rightarrow\left[0,\infty\right)$$ is an Orlicz function for all $x\in\mathbb R^{n}$
 (i.e. $\wp(x,\cdot)$ is non decreasing, $\wp(x,0)=0$, $\wp(x,t)>0$ for $t>0$ and  $\lim_{t\to \infty}\wp(x,t)=\infty$),
\item  $\wp$ belongs to $\mathbb A_{\infty}$,
 \item  $\wp$ is of  uniformly lower type $p$ for some $p\in(0,1]$ and of uniformly upper type $1$.
 \end{enumerate}
 \end{defn}
 \begin{defn}\label{admissible}
For $\wp$ a growth function, we define $N_{\wp}=\left\lceil n(\frac{q(\wp)}{i(\wp)}-1)\right\rceil.$
A triple $(\wp,q,s)$ is called  admissible if $\wp$ is a growth function, $q>q(\wp)$ and $s$ is an integer greater or equal to $N_{\wp}$. 
\end{defn}
\begin{remark} The functions $\wp(x,t)=w(x)t^p$ are growth functions when $p\leq 1$ and $w$ is an $A_{\infty}$ weight. We are interested in generalizations of the corresponding weighted Hardy spaces.  
\end{remark}
\begin{remark} We restrict ourselves to upper type $1$ since our aim is to generalize $\H^p$ spaces for $p\le 1$. Lower type $1$ would allow to generalize those $\H^p$ spaces for which $p\ge 1$.
\end{remark}

Finally, for $1\leq q\leq\infty$ and for  $B$ a ball in $\mathbb R^{n}$, we denote by $L^{q}_{\wp}(B)$ the space of all measurable functions $f$ on $\mathbb R^{n}$ supported in $B$ such that $\left\|f\right\|_{L^{q}_{\wp}(B)}<\infty$, where
\begin{equation}\label{NormeLqP}
\left\|f\right\|_{L^{q}_{\wp}(B)}:=\left\{\begin{array}{lll}\displaystyle \sup_{t>0}\left(\frac{1}{\wp(B,t)}\int_{\mathbb R^{n}}\left|f(x)\right|^{q}\wp(x,t)dx\right)^{\frac{1}{q}}&\text{ if }& 1\leq q< \infty\\
\displaystyle\left\|f\right\|_{L^{\infty}}&\text{ if }&q=\infty
\end{array}
\right.
\end{equation}
with $\displaystyle \wp(B,t):=\int_{B}\wp(x,t)dx$.

We need the following properties, which are given in \cite{Ky}.
Let $\wp$ be a growth function. Then, for $(f_j)_{j=1}^\infty$,
\begin{equation}\label{concave}
  \int_{\R^n}\wp( x, \sum_j|f_j(x)|)dx\lesssim \sum_j  \int_{\R^n}\wp( x, |f_j(x)|)dx.
  \end{equation}
  For $q\geq 1$,  
  \begin{equation}\label{holder}
   \int_{B}\wp( x, |f(x)|)dx\lesssim \wp(B, \left\|f\right\|_{L^{q}_{\wp}(B)}).
  \end{equation}
\smallskip

\begin{remark} In \cite{Ky} an extra assumption of uniform local integrability is used to obtain Lemma 6.1.  However, one can verify that it is an easy consequence of the fact that $\wp\in\mathbb A_{q}$ for some $q>1$.
\end{remark}

\subsection{Real valued Hardy spaces of Musielak-Orlicz type}

Let us recall the definition of this new class of functions.
\begin{defn}
Let $\wp$ be a growth function. The Hardy space of Musielak-Orlicz type  $\mathcal H^{\wp}(\bR^{n})$ is the space of all tempered distributions $f$ such that $f^+$ is in $L^{\wp}(\mathbb R^{n})$ equipped with the quasi-norm 
$$\left\|f\right\|_{\mathcal H^{\wp}}:=\left\|f^+\right\|_{L^{\wp}}.$$
\end{defn}
Here $f^+$ is defined by \eqref{f+}. The fact that  the Hardy space $\H^{\wp}(\bR^{n})$ does not depend on the function $\varphi$ used in the definition of $f^+$ is proved in \cite{LHY}. It is also proved there that the grand maximal function of a distribution $f\in \H^{\wp}(\bR^{n})$ is in $L^{\wp}(\mathbb R^{n})$.
\smallskip
 
 Let us now describe  the atomic decomposition of a distribution in $\H^{\wp}(\bR^{n})$.
  We first recall the definition of an atom, as given in \cite{Ky}.
 
 \begin{defn}\label{atom-def}
 Let  $(\wp,q,s)$ be an admissible triple.   A measurable function $ \a$ is called a $(\wp,q,s)$-atom related to the ball $B$ if it satisfies the following conditions. 
\begin{itemize}
\item [(i)]  $ \a\in L^{q}_{\wp}(B)$,
\item [(ii)] $\displaystyle \left\| \a\right\|_{L^{q}_{\wp}(B)}\leq\left\|\chi_{B}\right\|^{-1}_{L^{\wp}}$, 
\item [(iii)] $\displaystyle \int_{\mathbb R^{n}} \a(x)x^{\alpha}dx=0 \text{ for every }\left|\alpha\right|\leq s$.
 \end{itemize}
\end{defn}
Here $\chi_B$ denotes the characteristic function of $B$.  Given a sequence $\left\{\a_{j}\right\}_{j}$ of multiples of $(\wp,q,s)$-atoms with $\a_{j}$ related to the ball $B_{j}$, we put
 \begin{equation}\label{N_q}
 \mathfrak N_{q}(\left\{\a_{j}\right\})=\inf\left\{\lambda>0:\sum_{j}\wp\left(B_{j},\frac{\left\|\a_{j}\right\|_{L^{q}_{\wp}(B_{j})}}{\lambda}\right )\leq 1\right\}.
 \end{equation}

 The atomic decomposition theorem (Theorem 3.1 in \cite{Ky}) can be stated as follows.  
 \begin{prop}\label{atomicDecomp} Let $(\wp,q,s)$ be an admissible triple.  Assume that  $\left\{\a_{j}\right\}^{\infty}_{j=1}$ is a sequence of multiples of $(\wp,q,s)$-atoms,   with each $\a_{j}$ related to the  ball $B_{j}$. Assume moreover  that
$$ \sum_{j}\wp\left(B_{j},\left\|\a_{j}\right\|_{L^{q}_{\wp}(B_{j})}\right )< \infty.$$
Then  the series $\sum_j \a_j$ converges in the distribution sense and in $\H^{\wp}(\bR^{n})$. Moreover,
$$\|\sum_j \a_j\|_{\H^{\wp}}\lesssim \mathfrak N_{q}(\left\{\a_{j}\right\})<\infty.$$
Conversely, for $f\in \H^{\wp}(\bR^{n}),$ there exists a sequence $\left\{\a_{j}\right\}^{\infty}_{j=1}$ of multiples of $(\wp,q,s)$-atoms, each $\a_{j}$ related to the  ball $B_{j}$, such that 
 \begin{equation}
 f=\sum_{j} \a_{j} \qquad \text{ in  }\, \mathcal S'(\mathbb R^{n}),\label{decompo1}
 \end{equation}
 with 
 \begin{equation*}
 \sum_{j}\wp(B_{j},\left\|\a_{j}\right\|_{L^{q}_{\wp}(B_{j})})<\infty.
 \end{equation*}
 Moreover, if $\left\|f\right\|_{\mathcal H^{\wp}(\bR^{n})}=1$, then
 \begin{equation*}
 \mathfrak N_{q}(\left\{\a_{j}\right\})\sim 1.
 \end{equation*}
 \end{prop}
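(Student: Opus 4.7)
The plan is to prove the two implications of Proposition \ref{atomicDecomp} separately, following the strategy of \cite{Ky}. For the sufficiency direction, I would first establish a pointwise decay estimate for the grand maximal function of a single $(\wp,q,s)$-atom. Namely, for $\a_j$ related to the ball $B_j$ with center $x_{B_j}$, the vanishing moments of order up to $s \geq N_\wp$ together with the $L^q_\wp$-size condition yield, via a standard Taylor expansion argument,
\[
(\a_j)^+(x) \lesssim \|\a_j\|_{L^q_\wp(B_j)} \left(\frac{|B_j|}{|B_j| + |x - x_{B_j}|^n}\right)^{1+(s+1)/n}
\]
for $x$ outside a fixed dilate of $B_j$, while on this dilate one controls $(\a_j)^+$ through the $L^q_\wp$-boundedness of the Hardy-Littlewood maximal function (valid because $\wp \in \mathbb A_q$). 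Using the sub-additive inequality \eqref{concave} together with \eqref{holder}, one would then obtain
\[
\int_{\R^n} \wp\!\left(x, \frac{(\sum_j \a_j)^+(x)}{\lambda}\right) dx \lesssim \sum_j \wp\!\left(B_j, \frac{\|\a_j\|_{L^q_\wp(B_j)}}{\lambda}\right),
\]
and the norm estimate $\|\sum_j \a_j\|_{\H^\wp} \lesssim \mathfrak N_q(\{\a_j\})$ follows upon setting $\lambda = \mathfrak N_q(\{\a_j\})$.

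For the necessity, I would adapt the Calder\'on--Zygmund decomposition to the grand maximal function. Given $f \in \H^\wp$ with grand maximal function $f^*$, set $\Omega_k := \{x : f^*(x) > 2^k\}$ for $k \in \bZ$ and apply a Whitney decomposition of $\Omega_k$ to obtain cubes $\{Q_{k,i}\}_i$ with an associated smooth partition of unity $\{\eta_{k,i}\}_i$. Define the bad blocks $b_{k,i} := (f - P_{k,i})\eta_{k,i}$, where $P_{k,i}$ is the orthogonal projection of $f$ onto polynomials of degree $\leq s$ in the inner product weighted by $\eta_{k,i}$. Each $b_{k,i}$ has vanishing moments up to order $s$ and, after normalization, becomes a multiple of a $(\wp,q,s)$-atom supported in a dilate of $Q_{k,i}$. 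The good part $g_k := f - \sum_i b_{k,i}$ satisfies $\|g_k\|_\infty \lesssim 2^k$ by the Whitney property, and standard arguments yield $g_k \to f$ in $\H^\wp$ as $k \to +\infty$ and $g_k \to 0$ in $\S'$ as $k \to -\infty$. Telescoping $f = \sum_k (g_{k+1} - g_k)$ and regrouping the differences produces the atomic decomposition in \eqref{decompo1}.

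The main obstacle is verifying the summability condition $\sum_{k,i} \wp(B_{k,i}, \|\a_{k,i}\|_{L^q_\wp(B_{k,i})}) < \infty$ together with the quantitative equivalence $\mathfrak N_q(\{\a_{k,i}\}) \sim 1$ when $\|f\|_{\H^\wp} = 1$. Each atom constructed above satisfies $\|\a_{k,i}\|_{L^q_\wp(B_{k,i})} \sim 2^k \|\chi_{B_{k,i}}\|^{-1}_{L^\wp}$, reflecting the $L^\infty$-size $\sim 2^k$ of the bad blocks. Converting this into a $\wp$-estimate requires combining the uniform upper type $1$ of $\wp$, the definition of $\|\chi_{B_{k,i}}\|_{L^\wp}$, and the bounded overlap of the Whitney cubes with $\Omega_k \setminus \Omega_{k+1}$. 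A layer-cake argument then gives
\[
\sum_{k,i} \wp\!\left(B_{k,i}, 2^k \|\chi_{B_{k,i}}\|^{-1}_{L^\wp}\right) \lesssim \int_{\R^n} \wp(x, f^*(x))\, dx \leq 1
\]
whenever $\|f\|_{\H^\wp} = 1$. The delicate aspect throughout is tracking how the $x$-dependence of $\wp$ interacts with the geometric constructions; this is precisely what the uniform $\mathbb A_\infty$ and type hypotheses in Definition \ref{growthFunction} are designed to handle.
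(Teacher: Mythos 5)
The paper does not actually prove this proposition: it is quoted directly as Theorem~3.1 of the reference \cite{Ky}, so there is no internal proof to compare against. Your sketch reconstructs the standard argument used in that reference --- the pointwise decay of the maximal function of an atom (with $s\ge N_\wp$ guaranteeing integrability of the tail against $\wp(\cdot,t)\in\mathbb A_q$) combined with \eqref{concave} and \eqref{holder} for the direct part, and the Calder\'on--Zygmund decomposition on level sets of the grand maximal function with Whitney cubes, polynomial projections and telescoping for the converse --- and this is the correct route; as a high-level outline it contains no wrong turns, though of course the quantitative verification of $\mathfrak N_q(\{\a_j\})\sim 1$ is where the real work in \cite{Ky} lies.
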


\medskip

In this paper, we consider the corresponding spaces of closed $\ell$ forms, for  $1\leq\ell\leq n$.
\subsection{Hardy spaces of Musielak-Orlicz type of closed forms}

Let us  fix notations.
 
Let $\left\{e_{1},\ldots,e_{n}\right\}$ be the canonical orthonormal basis of the Euclidean space $\bR^{n}$ and $\left\{e^{1},\ldots,e^{n}\right\}$ be the corresponding dual basis. We  also use the classical notation $dx_1, \cdots, dx_n$ for the dual basis. For  $\ell\in\left\{1,\ldots,n\right\}$ we denote by $\Lambda^{\ell}$ the space  of  $\ell$-linear alternating forms. Its  canonical orthonormal basis is given by
\begin{equation*}
e^{I}=e^{i_{1}}\wedge\ldots\wedge e^{i_{\ell}},
\end{equation*}
 where $I$ varies among ordered $\ell$-tuples, $1\leq i_{1}<\ldots<i_{\ell}\leq n$. Let $\mathcal I_\ell$ denote the set of such ordered $\ell$-tuples.
  
 For every space $\mathcal E(\Omega)$ of functions or distributions, we easily define the space $\mathcal E(\Omega, \Lambda^\ell)$. Its elements may be written as 
 $$f=\sum_{I\in \mathcal I_\ell} f_I e^I,$$
 with $f_I$ in $\mathcal E(\Omega)$.  
 
 \smallskip
 
  In particular,   we define  the space $\mathcal H^{\wp}\left(\bR^{n},\Lambda^{\ell}\right)$ as the space of tempered distributions $f$ with values in $\Lambda^\ell$ such that $f^+$ is in $L^{\wp}(\R^n)$. In the definition of $f^+$, the absolute value is replaced by the (euclidean) norm in $\Lambda^\ell.$
 \smallskip
 
 All properties of Hardy spaces extend to these spaces. We will restrict to closed forms, that is, distributions $f$ such that $df=0$.
 Recall that the exterior derivative maps $\mathcal S'(\bR^{n},\Lambda^{\ell})$ into $\mathcal S'(\bR^{n},\Lambda^{\ell+1})$ and is defined, for
$f=\sum_{I\in\mathcal I_\ell}f_{I}e^{I}$, by
 \begin{equation*}
df:=\sum_{I\in\mathcal I_\ell}\sum_{j=1}^n\partial_{j}f_{I}e^j\wedge e^{I}.
 \end{equation*}
 The action of $f\in\mathcal S'(\bR^{n},\Lambda^{\ell})$ on $g\in \mathcal S(\bR^{n},\Lambda^{\ell})$ is given (with obvious notations) by
 $$\langle f, g\rangle:=\sum_{I\in\mathcal I_\ell} \langle f_I, g_I\rangle.$$
We also need the formal adjoint of the exterior derivative, which maps 
$\mathcal S'(\bR^{n},\Lambda^{\ell})$ into $\mathcal S'(\bR^{n},\Lambda^{\ell-1})$
and is defined by the identity
$$\langle\delta f, g\rangle =\langle f, dg\rangle.$$
Recall that the Hodge Laplacian is defined on $\mathcal S'(\bR^{n},\Lambda^{\ell})$ by
\begin{equation}\label{Hodge}
{\bf \Delta}=d\delta+\delta d.
\end{equation}
It coincides  with the ordinary Laplacian coefficient by coefficient.

	\bigskip

We can now define the  Hardy spaces  of Musielak-Orlicz type under consideration. 

\begin{defn}
 Let $\wp$ be a growth function. The Hardy space of Musielak-Orlicz type of closed $\ell$ forms, which we denote by   $\mathcal H^{\wp}_d \left(\bR^{n},\Lambda^{\ell}\right)$, is the space of all tempered distributions $f$ with values in $\Lambda^{\ell}$ such that $f$ is in $\H^{\wp}\left(\bR^{n},\Lambda^{\ell}\right)$ and $ df=0$.
\end{defn}

This generalizes the spaces $\H^{1}_{d}(\bR^{n},\Lambda^{\ell})$, which have been introduced in \cite {LM1}. Remark that this space reduces to $\{0\}$ when $\ell=0$ and identifies to  $\H^{\wp}(\bR^{n})$ when $\ell=n$. 

Since the convergence in $\mathcal H^{\wp} (\bR^{n})$ implies the convergence in the distribution sense, $\mathcal H^{\wp}_d \left(\bR^{n},\Lambda^{\ell}\right)$ is a closed subspace of $\mathcal H^{\wp} \left(\bR^{n},\Lambda^{\ell}\right)$.
\smallskip

The definition of atoms of $\mathcal H^{\wp}\left(\bR^{n},\Lambda^{\ell}\right)$  is the same as the one of real-valued atoms, except that now they take values in $\Lambda^{\ell}$. More precisely, a $(\wp,q,s, \ell)$-atom satisfies all properties given in Definition \ref{atom-def}, with $L^{q}_{\wp}(B)$ replaced by $L^{q}_{\wp}(B, \Lambda^{\ell})$. We now define atoms of $\mathcal H^{\wp}_d \left(\bR^{n},\Lambda^{\ell}\right)$. The only novelty is the requirement that they are closed.
\begin{defn} Let $1\leq \ell\leq n$. Let $\wp$ be a growth function and $(\wp,q,s)$ be an admissible triple.
A measurable function $\mathfrak a$ with values in $\Lambda^{\ell}$ is called a $(\wp,q,s, \ell)_{d}$-atom if it is a $(\wp,q,s, \ell)$-atom which satisfies $d\mathfrak a=0$. 
\end{defn}
\begin{remark} For $\ell=n$, a $(\wp,q,s, \ell)_{d}$-atom is just a $(\wp,q,s, \ell)$-atom.
\end{remark}
We can now state  the theorem of atomic decomposition in this context.

\begin{thm}\label{main1}
Let $1\leq\ell\leq n$ and $(\wp,q,s)$ be an admissible triple. Assume that  $\left\{\a_{j}\right\}^{\infty}_{j=1}$ is a sequence of multiples of $(\wp,q,s, \ell)_d$-atoms,  with each $\a_{j}$ related to the  ball $B_{j}$. Assume moreover  that
$$ \sum_{j}\wp\left(B_{j},\left\|\a_{j}\right\|_{L^{q}_{\wp}(B_{j},\Lambda^\ell)}\right )< \infty.$$
Then  the series $\sum_j \a_j$ converges in the distribution sense and in $\H_d^{\wp}(\bR^{n},\Lambda^{\ell})$. Moreover,
$$\left\|\sum_j \a_j\right\|_{{\H^{\wp}(\bR^{n},\Lambda^{\ell})}}\lesssim \mathfrak N_{q}(\left\{\a_{j}\right\})<\infty.$$
Conversely,  if $q<\infty$, and $f\in\H^{\wp}_d(\mathbb R^{n},\Lambda^{\ell})$,  there exists a sequence  $\left\{\a_{j}\right\}^{\infty}_{i=1}$ of multiples of  $(\wp,q,s, \ell)_{d}$-atoms, each $\a_{j}$ related to a ball $B_{j}$,  such that  
\begin{equation}
f=\sum_{j}\a_{j},\text{ in }\mathcal S'(\mathbb R^{n},\Lambda^{\ell}),\label{decompo2}
\end{equation}
with
 \begin{equation}
\sum_{j}\wp(B_{j},\left\|\a_j\right\|_{L^{q}_{\wp}(B_{j}, \Lambda^\ell)})<\infty.\label{size2}
\end{equation}
 Moreover, if $\left\|f\right\|_{\mathcal H^{\wp}(\bR^{n},\Lambda^{\ell})}=1$, then
\begin{equation}
 \mathfrak N_{q}(\left\{\a_{j}\right\})\sim 1.\label{equivalence2}
\end{equation}
\end{thm}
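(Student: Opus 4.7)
The direct implication is immediate: every $(\wp,q,s,\ell)_d$-atom is in particular a $\Lambda^\ell$-valued $(\wp,q,s)$-atom, so by applying Proposition \ref{atomicDecomp} componentwise the series $\sum_j\a_j$ converges both in $\H^{\wp}(\bR^n,\Lambda^\ell)$ and in $\mathcal S'(\bR^n,\Lambda^\ell)$, with the claimed norm control. Since $d$ is continuous on $\mathcal S'(\bR^n,\Lambda^\ell)$ and $d\a_j=0$ for every $j$, the sum $f=\sum_j\a_j$ satisfies $df=0$, hence $f\in\H^{\wp}_d(\bR^n,\Lambda^\ell)$.

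The converse is the main point. Because the construction of \cite{Ky} does not preserve closedness, the plan is to follow the tent-space scheme of \cite{CMS}, already adapted to closed $\H^1$-forms in \cite{AMR}, and to transplant it to the Musielak--Orlicz setting. The crucial ingredient is a Calder\'on-type reproducing formula that displays $f$ as a superposition of \emph{exact} forms. Choose a radial Schwartz bump $\phi$ with a large number of vanishing moments and set $\Psi_t := t^2 \mathbf{\Delta}\phi_t$; using the Hodge identity \eqref{Hodge} together with $df=0$, one has $\Psi_t\ast f = d\bigl[t^2(\delta\phi_t)\ast f\bigr]$. Inserting this into a standard Calder\'on identity one obtains, in $\mathcal S'(\bR^n,\Lambda^\ell)$,
$$f \;=\; c\int_0^{\infty} d\bigl[\widetilde\Psi_t\ast \bigl(t^2(\delta\phi_t)\ast f\bigr)\bigr]\,\frac{dt}{t},$$
where $\widetilde\Psi_t$ is a suitably chosen dual bump. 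Every slice under the integral is manifestly exact, hence closed.

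Next I would decompose the auxiliary object $F(x,t):=t^2\bigl((\delta\phi_t)\ast f\bigr)(x)$, which, by the square-function characterization of $\H^{\wp}$ from \cite{HYY,LHY}, belongs to a Musielak--Orlicz tent space $T^{\wp}\bigl(\bR^n\times(0,\infty),\Lambda^{\ell-1}\bigr)$. The atomic decomposition of $T^{\wp}$, obtained by transplanting \cite{CMS} to the present weighted setting using the $\mathbb A_\infty$-condition and inequalities \eqref{concave}--\eqref{holder}, writes $F=\sum_k\lambda_k A_k$ with each $A_k$ supported in a tent over a ball $B_k$ and normalized by $\|\chi_{B_k}\|_{L^{\wp}}^{-1}$. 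Substituting into the reproducing formula yields $f=\sum_k\a_k$, where each
$$\a_k := c\int_0^{\infty} d\bigl[\widetilde\Psi_t\ast \lambda_k A_k(\cdot,t)\bigr]\,\frac{dt}{t}$$
is exact (and in particular closed), is supported in a fixed dilate of $B_k$ by the off-diagonal decay of $\widetilde\Psi_t$, and has components satisfying the $L^q_{\wp}$-size bound through \eqref{holder} and the $\mathbb A_q$-condition; the moment condition (iii) of Definition \ref{atom-def} comes from the vanishing moments of $\phi$ combined with the presence of $d$ in front of the integral.

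The hardest step will be the quantitative verification that each $\a_k$ is indeed a multiple of a $(\wp,q,s,\ell)_d$-atom in the Musielak--Orlicz sense: namely, controlling $\|\a_k\|_{L^q_{\wp}(B_k,\Lambda^\ell)}$ by a uniform multiple of $\|\chi_{B_k}\|_{L^{\wp}}^{-1}$ and obtaining \eqref{size2} and the equivalence \eqref{equivalence2} with the right dependence on the $T^{\wp}$-atomic quantity $\mathfrak N_q(\{\a_j\})$. This is where the generalization from the unweighted tent spaces of \cite{CMS} to the Musielak--Orlicz tent spaces genuinely departs from the $\H^1$-argument of \cite{AMR}, and it hinges on careful use of the growth properties of $\wp$ listed in Subsection 2.1, in the spirit of the scalar estimates of \cite{Ky}.
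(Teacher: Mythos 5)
Your proposal is correct and follows essentially the same route as the paper: the direct part via the componentwise scalar result and continuity of $d$, and the converse via the Calder\'on reproducing formula rewritten with the Hodge identity and $df=0$ so that every slice is exact, followed by the atomic decomposition of $t\,\delta f*\varphi_t$ in the Musielak--Orlicz tent space and the operator $\pi_\varphi$ sending tent atoms to closed Hardy atoms (Lemmas \ref{controle2} and \ref{control}). The only practical differences are that the tent-space atomic decomposition you propose to re-derive is simply quoted from \cite{HYY} (Theorem \ref{decomposition_form}), and that the paper must additionally justify the reproducing formula via weak vanishing at infinity and verify the distributional convergence of $\sum_j\lambda_j\a_j$ to $f$, a step your ``substituting into the reproducing formula'' glosses over.
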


The first statement is a direct consequence of the corresponding statement when there is no condition on the exterior derivative. Indeed, this implies that  $\sum_j \a_j$ converges in the distribution sense and in $\H^{\wp}(\bR^{n},\Lambda^{\ell})$. Since all atoms are closed, the same is valid for the limit.

So only the converse, that is, the atomic decomposition, deserves a proof. We cannot use the proof given in \cite{Ky} since it would not guarantee that atoms are closed. As in \cite{AMR} we use atomic decomposition of tent spaces and follow the scheme given in \cite{CMS}. Tent spaces and their factorization have already been used in \cite{HYY} in the context of Hardy spaces of Musielak-Orlicz spaces to prove that $\H^{\wp}(\bR^n)$ can also be defined in terms of the area function.

Let us first recall the atomic decomposition in tent spaces.

 \subsection{Atomic decomposition in tent spaces}

 First, we recall the definitions related to tent spaces. For simplification we write definitions  in the scalar case, but they easily generalize to the case of forms.

 For all measurable functions $F$ on $\mathbb R^{n+1}_{+}$ and all $x\in\mathbb R^{n}$, let
\begin{equation*}
S(F)(x)=\left(\int_{\Gamma(x)}\left|F(y,t)\right|^{2}\frac{dydt}{t^{n+1}}\right)^{\frac{1}{2}},
\end{equation*}
with $\Gamma(x)=\left\{(y,t)\in\bR^{n+1}_{+}:\left|x-y\right|<t\right\}$. For $0<p<\infty$, Coifman, Meyer and  Stein \cite{CMS}, defined the tent space $\mathcal T^{p}(\mathbb R^{n+1}_{+})$ as the space of measurable functions $g$ satisfying  $\left\|g\right\|_{\mathcal T^{p}(\mathbb R^{n+1}_{+})}:=\left\|S(g)\right\|_{L^p(\R^n)}<\infty$. 
Here, we consider as in \cite{HYY} the following generalization of tent spaces.
\begin{defn}
Let $\wp$ be a growth function. The Musielak-Orlicz tent space  $\mathcal T^{\wp}(\mathbb R^{n+1}_{+})$ is the space of measurable functions $F$  on $\mathbb R^{n+1}_{+}$ such that $S(F)\in L^{\wp}(\R^n)$ with the (quasi-)norm defined by
\begin{equation*}
\left\|F\right\|_{\mathcal T^{\wp}(\mathbb R^{n+1}_{+})}\equiv\left\|S(F)\right\|_{L^{\wp}(\mathbb R^{n})}.
\end{equation*}
\end{defn}
We recall the notion of $(\wp,p)-$atom.
\begin{defn}
 Let $1<p<\infty$. A function $\A$ is called a $(\wp,p)-$atom related to the ball $B\subset \mathbb R^{n}$  if it satisfies the following condition.
\begin{itemize}
\item [(i)] $\A$ is supported in the tent over $B$, defined by
$$ \widehat{B}=\left\{(x,t)\in\bR^{n+1}_{+}:\left|x_{B}-x\right|+t\leq r_B\right\},$$
\item [(ii)] $\left\|\A\right\|_{\mathcal T^{p}(\mathbb R^{n+1}_{+})}\leq\left|B\right|^{1/p}\left\|\chi_{B}\right\|^{-1}_{L^{\wp}(\mathbb R^{n})}$.
\end{itemize}
Here $x_B$ is the center of $B$ and $r_B$ its radius.

We say that $\A$ is a $(\wp,\infty)-$ atom if it is a $(\wp,p)-$atom for all $1<p<\infty$.
\end{defn}
The following atomic decomposition result has been established in \cite{HYY} in the case of complex valued functions. It can be easily extended to vector valued functions, hence to forms, and we write it in this context.

\begin{thm}[Theorem 3.1 \cite{HYY}]\label{decomposition_form} 
   Let $\wp$ be a growth function. For any $F\in \mathcal T^{\wp}(\R^{n+1}_+, \Lambda^{\ell})$, there exist $\{\lambda_j\}_{j=1}^\infty\subset \mathbb C$ and a sequence $\left\{\A_{j}\right\}^{\infty}_{j=1}$  of $(\wp,\infty)$-atoms of  $\mathcal T^{\wp}(\R^{n+1}_+, \Lambda^{\ell})$, each $\A_j$ related to the ball $B_j$,   such that 
\begin{equation*}
F=\sum_{j} \lambda_j \A_{j}  \;\;\mbox{a. e. in $\R^{n+1}_+$}.
\end{equation*} 
 Moreover,
\begin{equation}\label{atoms-tent}
\widehat\N(\{\lambda_j \A_{j}\}):=\inf\left\{\lambda>0:\sum_j \wp\left(B_{j},\frac{\left|\lambda_{j}\right|}{\lambda\left\|\chi_{B_{j}}\right\|_{L^{\wp}(\mathbb R^{n})}}\right)\leq 1\right\}\lesssim\left\|F\right\|_{\mathcal T^{\wp}(\mathbb R^{n+1}_{+}, \Lambda^{\ell})}.
\end{equation}

\end{thm}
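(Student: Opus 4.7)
The plan is to follow the classical Coifman--Meyer--Stein (CMS) scheme of decomposing $F$ along level sets of its area function $S(F)$, adapted to the Musielak--Orlicz setting. First, for each $k\in\bZ$ set
\begin{equation*}
O_k:=\{x\in\bR^n:\, S(F)(x)>2^k\},\qquad O_k^*:=\{x\in\bR^n:\, M\chi_{O_k}(x)>1/2\},
\end{equation*}
where $M$ is the Hardy--Littlewood maximal operator. The sets $O_k$ are open (since $S(F)$ is lower semicontinuous, which follows from Fatou's lemma). I would then take a Whitney decomposition $\{Q_{k,j}\}_j$ of $O_k^*$ and let $B_{k,j}$ be a slightly enlarged ball containing $Q_{k,j}$ with radius comparable to $\mathrm{dist}(Q_{k,j},(O_k^*)^c)$.

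Next, I would decompose $F$ geometrically via the tents: writing $\widehat{O}$ for the tent over $O$, one has
\begin{equation*}
F=\sum_k F\cdot \chi_{\widehat{O_k}\setminus\widehat{O_{k+1}}}=\sum_{k,j}F_{k,j},\qquad F_{k,j}:=F\cdot \chi_{T_{k,j}},
\end{equation*}
with $T_{k,j}$ the intersection of $\widehat{O_k}\setminus\widehat{O_{k+1}}$ with a suitable tent-like region over $Q_{k,j}$ obtained from the Whitney family. One arranges $T_{k,j}\subset\widehat{B_{k,j}}$, which gives the support condition (i). For the size condition (ii), I would set
\begin{equation*}
\lambda_{k,j}:=2^k\,|B_{k,j}|^{1/p}\,\frac{\|\chi_{B_{k,j}}\|_{L^\wp}}{|B_{k,j}|^{1/p}}\cdot C=C\,2^k\,\|\chi_{B_{k,j}}\|_{L^\wp},
\end{equation*}
and $\A_{k,j}:=\lambda_{k,j}^{-1}F_{k,j}$. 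The key geometric input is that if $(y,t)\in T_{k,j}$, then the cone $\Gamma(x)$ through $(y,t)$ with $x\notin O_{k+1}$ still gives $S(F)(x)\leq 2^{k+1}$; integrating $|F|^2\,dy\,dt/t^{n+1}$ over $T_{k,j}$ against a test function supported far from $O_{k+1}$ produces the bound $\|F_{k,j}\|_{\mathcal T^p}\lesssim 2^k|B_{k,j}|^{1/p}$, so $\A_{k,j}$ is (a constant multiple of) a $(\wp,p)$-atom uniformly in $p<\infty$, hence a $(\wp,\infty)$-atom.

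Finally, the quasi-norm estimate \eqref{atoms-tent} follows from the Musielak--Orlicz analogue of the distribution-function identity for $S(F)$. With the choice above, $|\lambda_{k,j}|/\|\chi_{B_{k,j}}\|_{L^\wp}\sim 2^k$, and since the Whitney cubes at level $k$ are disjoint with $\bigcup_j Q_{k,j}\subset O_k^*$ and $|O_k^*|\lesssim |O_k|$, one estimates
\begin{equation*}
\sum_{k,j}\wp\bigl(B_{k,j},2^k\bigr)\lesssim \sum_k \wp(O_k,2^k)\lesssim \int_{\bR^n}\wp(x,S(F)(x))\,dx,
\end{equation*}
where the last step uses the layer-cake decomposition together with the uniform upper type $1$ of $\wp$. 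Taking the infimum over normalizations yields $\widehat{\N}(\{\lambda_j\A_j\})\lesssim \|F\|_{\mathcal T^\wp}$. The main obstacle is the last estimate: the uniform Muckenhoupt $\mathbb A_\infty$ hypothesis is essential to pass from $\wp(O_k^*,\cdot)$ back to $\wp(O_k,\cdot)$ (a weighted Besicovitch/maximal-type inequality), and the uniform upper type $1$ control on $\wp(x,\cdot)$ is what allows one to pay only a geometric-series price when summing the dyadic levels $2^k$.
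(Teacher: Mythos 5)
Your sketch reconstructs the Coifman--Meyer--Stein stopping-time argument in the Musielak--Orlicz setting, and this is precisely the route of the cited source: the paper itself does not prove Theorem \ref{decomposition_form} but quotes it from \cite{HYY} (their Theorem 3.1), adding only the remark that the scalar argument extends verbatim to $\Lambda^{\ell}$-valued $F$ (run the stopping time on $S(F)$ with $|F(y,t)|$ the Euclidean norm of the form). So your architecture --- level sets $O_k$ of $S(F)$, the dilated sets $O_k^*$, a Whitney decomposition, truncation of $F$ to $\widehat{O_k^*}\setminus\widehat{O_{k+1}^*}$ intersected with tent-like regions over the Whitney cubes, and coefficients $\lambda_{k,j}\sim 2^{k}\|\chi_{B_{k,j}}\|_{L^{\wp}}$ --- is the correct one and matches what is actually done in \cite{HYY}.

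Two points in your sketch need correction or expansion. First, a $(\wp,\infty)$-atom must satisfy the $\mathcal T^{p}$ bound for \emph{every} $p\in(1,\infty)$ with the stated normalization; ``integrating $|F|^{2}\,dy\,dt/t^{n+1}$ over $T_{k,j}$'' only yields the case $p=2$. The uniform-in-$p$ bound is obtained by duality: pair $F_{k,j}$ against $G\in\mathcal T^{p'}$ of norm one and use the inequality $\int_{\bigcup_{x\in E}\Gamma(x)}|F|\,|G|\,\frac{dy\,dt}{t}\lesssim\int_{E}S(F)\,S(G)\,dx$ with $E=(O_{k+1})^{c}\cap cB_{k,j}$, on which $S(F)\le 2^{k+1}$; H\"older's inequality then gives $\|F_{k,j}\|_{\mathcal T^{p}}\lesssim 2^{k}|B_{k,j}|^{1/p}$ with a constant independent of $p$. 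Second, in the final summation the geometric-series gain over the dyadic levels comes from the uniform \emph{lower} type $p>0$ of $\wp$: for $2^{k}\le S(F)(x)$ one writes $\wp(x,2^{k})\lesssim(2^{k}/S(F)(x))^{p}\,\wp(x,S(F)(x))$ and sums over $k$. It does not come from the upper type $1$, which controls dilations by factors $\ge 1$ and points the wrong way here. Your use of $\mathbb A_{\infty}$ to pass from $\wp(O_k^*,\cdot)$ back to $\wp(O_k,\cdot)$ is correct. With these two repairs the argument is complete and agrees with the proof in \cite{HYY}.
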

In all expressions, $|e|$ is now interpreted as the norm of $e$ when $e$ is a $\ell-$form.

As in the proof of \cite{CMS} we use the atomic decomposition for  functions $F$, which are defined by    
\begin{equation}
F(x, t)= f*\varphi_t(x).
\end{equation}

Let us now state two intermediate results of \cite{HYY}, which we will use with slightly different assumptions. 

\begin{lem}\label{controle2}
Let  $(\wp,q,s)$  be an admissible triple, and let  $\varphi\in\mathcal S(\mathbb R^{n})$ be supported in the unit ball centered at $0$.  Assume moreover that $\dst\int_{\R^n} x^{\gamma}\varphi(x)dx=0$ for $\left|\gamma\right|\le s$. 
Then, for all $f\in\H^{\wp}(\R^n)$,
\begin{equation*}
\Vert F\Vert _{\mathcal T^{\wp}(\R^{n+1}_+)}=\left\|S(F)\right\|_{L^{\wp}(\mathbb R^{n})}\lesssim\left\|f\right\|_{\H^{\wp}(\mathbb R^{n})},
\end{equation*}
where $F(x,t):= f*\varphi_t(x)$.
\end{lem}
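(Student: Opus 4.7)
The plan is to reduce the estimate to a single atom via Proposition \ref{atomicDecomp} and then split into a local and a far-field contribution. By atomic decomposition write $f=\sum_j \a_j$ with each $\a_j$ a multiple of a $(\wp,q,s)$-atom related to a ball $B_j$ and $\mathfrak N_q(\{\a_j\})\lesssim \|f\|_{\H^\wp}$. Setting $F_j(x,t):=\a_j*\varphi_t(x)$, the sublinearity of $S$ gives $S(F)\le \sum_j S(F_j)$, and then \eqref{concave} reduces matters (after scaling by the Luxemburg norm) to the uniform atomic bound
\begin{equation*}
\int_{\R^n}\wp\!\bigl(x,\,S(\a*\varphi_t)(x)\bigr)\,dx\,\lesssim\,\wp\!\bigl(B,\|\a\|_{L^q_\wp(B)}\bigr)
\end{equation*}
for any $(\wp,q,s)$-atom $\a$ related to $B$, after which summation over $j$ yields the claim.

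I would split this integral as $\int_{2B}+\int_{(2B)^c}$. Near $B$, the weighted Littlewood--Paley inequality $\|S(g*\varphi_t)\|_{L^q(w)}\lesssim \|g\|_{L^q(w)}$ (valid for $w\in A_q$, using that $\varphi\in\mathcal S(\R^n)$ has vanishing first moment) applied with $w=\wp(\cdot,\tau)$ uniformly in $\tau>0$ yields $\|S(\a*\varphi_t)\|_{L^q_\wp(2B)}\lesssim \|\a\|_{L^q_\wp(B)}$; then \eqref{holder} combined with doubling of $\wp(\cdot,\tau)$ (a consequence of $\wp\in\mathbb A_q$) handles the local contribution. For the far piece, $x\notin 2B$, I would use the full moment cancellation of $\a$: since $\supp\varphi_t\subset B(0,t)$, a Taylor expansion of $z\mapsto\varphi_t(y-z)$ around $x_B$ combined with $\int z^\gamma \a(z)\,dz=0$ for $|\gamma|\le s$ yields
\begin{equation*}
|\a*\varphi_t(y)|\,\lesssim\,\|\a\|_{L^1}\,\frac{r_B^{s+1}}{(t+|y-x_B|)^{n+s+1}},
\end{equation*}
after which integration over $\Gamma(x)$ (where $t\lesssim |x-x_B|$ when $x\notin 2B$) produces $S(\a*\varphi_t)(x)\lesssim \|\a\|_{L^1}\,r_B^{s+1}\,|x-x_B|^{-(n+s+1)}$.

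Summing over the dyadic annuli $2^{k+1}B\setminus 2^kB$ and invoking $\wp\in\mathbb A_q$ controls the far contribution by a geometric series in $k$. The main obstacle lies precisely in this far-field step: one must balance the cancellation order $s$ against the growth of $\wp(2^kB,\cdot)$ on dyadic scales, which is governed by $q(\wp)$, and then fold everything back into $\wp(B,\cdot)$ using the uniform lower type $p\in(0,1]$ of $\wp$. The hypothesis $s\ge N_\wp=\lceil n(q(\wp)/i(\wp)-1)\rceil$ is exactly what secures absolute convergence of the resulting series; this is the argument carried out in \cite{HYY}, and it adapts without change to the slightly different support/moment assumptions on $\varphi$ used here.
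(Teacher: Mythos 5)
Your proof is correct, and it is in fact closer to the full argument of \cite{HYY} than to the shortcut the paper itself takes. Both you and the paper reduce to a single atom via the atomic decomposition together with \eqref{concave}, and both handle the main contribution through the uniform weighted Littlewood--Paley estimate in $L^q(\wp(\cdot,t))$ followed by \eqref{holder}. The difference lies in the far field: the paper asserts that $S_\varphi(\a)$ is supported in the doubled ball $\widetilde B$ and therefore performs no off-support analysis, whereas you estimate the tail on $(2B)^c$ using the moment cancellation of the atom, dyadic annuli, and the hypothesis $s\ge N_\wp$. Your extra work is not redundant: the support claim is not literally true, because for $x$ far from $B$ the cone $\Gamma(x)$ still contains points $(y,t)$ with large $t$ at which $\a*\varphi_t(y)$ is small (thanks to the vanishing moments) but not identically zero; the decay estimate you derive is exactly what is needed to close this gap, and it is the computation for which the admissibility condition on $s$ is designed. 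One small slip to fix: for $x\notin 2B$ the nonzero contribution to $\Gamma(x)$ comes from $t\gtrsim|x-x_B|$, not $t\lesssim|x-x_B|$ as you wrote, since one needs $|x-y|<t$ and $|y-x_B|\le r_B+t$ simultaneously; your displayed conclusion $S(\a*\varphi_t)(x)\lesssim\|\a\|_{L^1}\,r_B^{s+1}\,|x-x_B|^{-(n+s+1)}$ is nevertheless correct and follows from integrating the pointwise bound over precisely that range of $t$.
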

\begin{proof}
One can find this result in \cite{HYY} under the additional assumptions that $\varphi$ is radial and  
 \begin{equation}\label{FourierHyp}
\int^{\infty}_{0}|\hat{\varphi}(t\xi)|^{2}\frac{dt}t=1
\end{equation} 
so that $S(F)$ coincides with the Lusin function of $f$, defined by
$$S_{\varphi}(f)(x):=\left(\int_{\Gamma(x)}\left|f\ast\varphi_{t}(y)\right|^{2}\frac{dydt}{t^{n+1}}\right)^{\frac{1}{2}}.$$
This allows them to prove that $f$ is in $\H^{\wp}(\R^n)$ if and only if  $S_{\varphi}(f)$ is in $L^{\wp}(\R^n)$. We are only interested by one implication and it 
 is clear that the   assumption \eqref{FourierHyp} can be relaxed into $$\sup_\xi\int^{\infty}_{0}|\hat{\varphi}(t\xi)|^{2}\frac{dt}t< \infty,$$     a positive lower bound being only necessary for the other implication. But this upper bound is a consequence of the moment conditions. The fact that the function is radial plays no role. One can follow the proof given in \cite{HYY} (see Equation (4.8)). Since the function $\varphi$ is only assumed to be in the Schwartz class and not necessarily compactly supported in \cite{HYY}, it is possible to have a simpler proof. Let us sketch it.   The main step  concerns the case when $f$ is a $(\wp, q, s)-$atom which is related to some ball $B$. Then $S_\varphi(f)$ is supported in  $\widetilde B$,  the ball with same center as $B$  and radius doubled. Moreover,  weighted $L^q$ estimates for the Lusin function imply that, uniformly in $t$, 
 $$\int_{\mathbb R^{n}}\left|S_\varphi(f)(x)\right|^{q}\wp(x,t)dx\lesssim \int_{\mathbb R^{n}}\left|f(x)\right|^{q}\wp(x,t)dx.$$
 Since $\wp(\widetilde B, t)\sim \wp( B, t)$, this implies that $\|S_\varphi(f)\|_{L_\wp^q(\widetilde B)}\lesssim \|f\|_{L_\wp^q( B)},$
 and, as a consequence, using \eqref{holder}, 
 $$\int_{\R^n} \wp(x, S_\varphi(f)(x))dx\lesssim \wp (B, \|f\|_{L_\wp^q( B)}).$$
 For general $f$ we use the atomic decomposition and \eqref{concave}.
 \end{proof}

 The next result may also be found in \cite{HYY}. 

 \begin{lem}\label{control}
 Let  $(\wp,q,s)$, $q<\infty$,  be an admissible triple and let  $\varphi\in\mathcal S(\mathbb R^{n})$ be supported in the unit ball centered at $0$.  Assume moreover that $\dst\int_{\R^n} x^{\gamma}\varphi(x)dx=0$ for $\left|\gamma\right|\le s$. 
Then, for any $(\wp,\infty)-$atom $\A$ in $\mathcal T^{\wp}(\R^{n+1}_+)$ related to the ball $B$,  the function 
\begin{equation}\label{Atoa}
\a(x):=\pi_{\varphi}(\A)(x):=\int_0^\infty (\A(\cdot,t)*\varphi_t)(x)\frac{dt}{t}
\end{equation}
is a $(\wp, q, s)$ atom of $\H^{\wp}(\R^n)$ related to the ball $\widetilde B$. Moreover, if $\left\{\A_{j}\right\}^{\infty}_{j=1}$ is a sequence of $(\wp,\infty)-$atoms of $\mathcal T^{\wp}(\R^{n+1}_+)$, each $\A_j$ related to the ball $B_j$ and if $\{\lambda_j\}_{j=1}^\infty$ is a sequence of scalars, then, for $\a_j=\pi_\varphi(\A_j)$, we have 
$$\N_q(\left\{\lambda_j\a_{j}\right\})\lesssim \widehat{\N}(\left\{\lambda_j\A_{j}\right\}).$$
\end{lem}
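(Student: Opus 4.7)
The plan is to verify that $\a=\pi_\varphi(\A)$ satisfies the three conditions of Definition~\ref{atom-def} with $B$ replaced by $\widetilde B$, up to a multiplicative constant depending only on the admissible data; the control $\N_q(\{\lambda_j\a_j\})\lesssim\widehat{\N}(\{\lambda_j\A_j\})$ will then follow at once by comparing the two defining infima, using $\wp(\widetilde B_j,t)\sim\wp(B_j,t)$ and $\|\chi_{\widetilde B_j}\|_{L^\wp}\sim\|\chi_{B_j}\|_{L^\wp}$, both of which hold because $\wp\in\mathbb A_\infty$.

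Support and vanishing moments are straightforward. Since $\A$ is supported in the tent $\widehat B$, the slice $\A(\cdot,t)$ lives in $\{y:|y-x_B|\le r_B-t\}$ and vanishes for $t>r_B$; convolving with $\varphi_t$, supported in $B(0,t)$, keeps the support inside $B\subset\widetilde B$. For $|\alpha|\le s$, Fubini writes
\[
\int_{\R^n}x^\alpha\a(x)\,dx=\int_0^\infty\!\int_{\R^n}\A(y,t)\left(\int_{\R^n}(y+z)^\alpha\varphi_t(z)\,dz\right)dy\,\frac{dt}{t};
\]
expanding $(y+z)^\alpha$ by the multi-index binomial formula and using $\int z^\beta\varphi_t(z)\,dz=t^{|\beta|}\int z^\beta\varphi\,dz=0$ for $|\beta|\le s$ kills every term.

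The main content of the lemma is the size estimate $\|\a\|_{L^q_\wp(\widetilde B)}\lesssim\|\chi_{\widetilde B}\|_{L^\wp}^{-1}$. For each fixed $t>0$, $\wp(\cdot,t)$ is uniformly an $A_q$ weight, so by weighted $L^q$--$L^{q'}$ duality I would reduce to bounding $|\int\a\,h\,dx|$ for $h$ supported in $\widetilde B$ with unit weighted $L^{q'}$-norm. The pairing rewrites as the tent-space inner product
\[
\int_{\R^n}\a(x)h(x)\,dx=\int_0^\infty\!\int_{\R^n}\A(y,t)\,(\psi_t*h)(y)\,\frac{dy\,dt}{t},\qquad \psi(x)=\varphi(-x),
\]
so Cauchy--Schwarz in $\mathcal T^2$, together with $\|F\|_{\mathcal T^2}^2\sim\iint|F(y,t)|^2\,dy\,dt/t$, yields a bound by $\|\A\|_{\mathcal T^2}\|\psi_t*h\|_{\mathcal T^2}$. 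The atomic condition applied with exponent $2$ controls $\|\A\|_{\mathcal T^2}\le|B|^{1/2}\|\chi_B\|_{L^\wp}^{-1}$, and a Littlewood--Paley estimate controls $\|\psi_t*h\|_{\mathcal T^2}\lesssim\|h\|_{L^2}$ thanks to the vanishing moments of $\varphi$. The real obstacle, and the reason this lemma is imported from \cite{HYY} rather than proved from scratch, is to run this last pair of estimates with the weight $\wp(\cdot,t)$ in place of Lebesgue measure: one needs the $A_q$-weighted Littlewood--Paley inequality uniformly in $t$, and one exploits that a $(\wp,\infty)$-atom is a $(\wp,p)$-atom for every $p$ so that $\A$ can be tested against the right weighted dual. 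Assembling the pieces produces $\int|\a|^q\wp(x,t)\,dx\lesssim\wp(\widetilde B,t)\|\chi_{\widetilde B}\|_{L^\wp}^{-q}$ uniformly in $t$, which, after taking the supremum in $t$, is exactly the atom-size condition, and the second assertion then follows directly.
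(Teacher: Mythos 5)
Your support and moment verifications are fine and match the paper's proof, which simply calls them elementary. The gap is in the size estimate, which is the entire content of the lemma. After rewriting $\int\a\, h\,dx$ as a tent-space pairing and applying Cauchy--Schwarz in $\mathcal T^2$, you obtain a bound by $\|\A\|_{\mathcal T^2}\,\|\psi_t*h\|_{\mathcal T^2}\lesssim \|\A\|_{\mathcal T^2}\,\|h\|_{L^2}$, which is an \emph{unweighted} $L^2$ statement; but to prove $\|\a\|_{L^q_\wp(\widetilde B)}\lesssim\|\chi_B\|_{L^\wp}^{-1}$ by duality you must pair against $h$ normalized in $L^{q'}(\wp(\cdot,t)^{1-q'})$, and the Cauchy--Schwarz step does not connect to that normalization. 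You acknowledge this yourself and defer the missing step to an unproved ``$A_q$-weighted Littlewood--Paley inequality uniformly in $t$'' together with an unspecified choice of weighted dual. So the essential quantitative estimate is asserted, not proved.

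The paper closes exactly this gap without any weighted square-function theory, and the mechanism is worth adopting. It quotes from \cite{CMS} only the unweighted inequality $\|\a\|_{p}\lesssim\|\A\|_{\mathcal T^{p}}$ and exploits the fact that a $(\wp,\infty)$-atom is a $(\wp,p)$-atom for \emph{every} $p$, so that $\|\A\|_{\mathcal T^{p}}\le|B|^{1/p}\|\chi_B\|_{L^\wp}^{-1}$ for a $p$ chosen strictly larger than $q$. The weight then enters only through H\"older's inequality, $\int_{\widetilde B}|\a|^q\wp(x,t)\,dx\le\|\a\|_p^q\bigl(\int_{\widetilde B}\wp(x,t)^r\,dx\bigr)^{1/r}$ with $r$ conjugate to $p/q$, combined with the reverse H\"older inequality for the uniformly $A_q$ weights $\wp(\cdot,t)$, which gives $\bigl(|\widetilde B|^{-1}\int_{\widetilde B}\wp(x,t)^r\,dx\bigr)^{1/r}\lesssim\wp(B,t)/|B|$ for some $r>1$ uniformly in $t$; one then fixes $p$ according to this $r$. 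This yields $\wp(B,t)^{-1}\int_{\widetilde B}|\a|^q\wp(x,t)\,dx\lesssim\|\chi_B\|_{L^\wp}^{-q}$ uniformly in $t$, i.e.\ the atom size condition, with no weighted Littlewood--Paley input at all. Replace your weighted-duality sketch by this H\"older/reverse-H\"older transfer; the final comparison $\N_q(\{\lambda_j\a_j\})\lesssim\widehat{\N}(\{\lambda_j\A_j\})$ is then routine (via \eqref{concave} and $\wp(\widetilde B_j,t)\sim\wp(B_j,t)$), as you indicate.
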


\begin{proof} We also give the proof for completeness. Let us consider an atom $\A$. It is elementary to verify that $\a$ satisfies the support and moment conditions. Let $p\in (q,\infty)$ We refer to \cite{CMS} for the inequality 
$$\|\a\|_p\lesssim \|\A\|_{\mathcal T^p(\mathbb R^{n+1}_{+})}.$$
As $A$ is a $(\wp,p)$-atom for every $p$, and satisfies $$\|\A\|_{\mathcal T^p(\mathbb R^{n+1}_{+})}\le |B|^{1/p}\left\|\chi_{B}\right\|^{-1}_{L^{\wp}(\mathbb R^{n})},$$
hence, to conclude for the size estimate of $\a$, it is sufficient to prove that, for all $t>0$,
$$\frac 1{\wp(B,t)}\int_{\widetilde B}| \a(x)|^q\wp(x,t)dx\leq C\|\a\|_p^q |B|^{-q/p}$$ for a uniform constant $C$. By  H\"older Inequality, written for the exponent $p/q$ and the conjugate exponent $r$, the left hand side is bounded by 
$$\frac {1}{\wp(B,t)}\left(\int_{\widetilde B}\wp(x, t)^r dx\right)^{1/r}\;\|\a\|_p^q.$$
We can conclude if we can prove that
$$\left(\frac 1{|\widetilde B|}\int_{\widetilde B}\wp(x, t)^r dx\right)^{1/r}\lesssim \frac{\wp(B,t)}{|B|}.$$
We recognize here H\"older's reverse Inequality, which is valid, uniformly in $t$, because of the assumption $\mathbb A_q$, for some $r>1$. So we fix $p=rq$.

The rest of the proof, that is, the passage to the atomic decomposition, is straightforward when using \eqref{concave}. 
 \end{proof}
 \begin{remark}
These two lemmas extend to vector valued distributions hence to $\mathcal T^{\wp}(\R^{n+1}_+, \Lambda^{\ell})$.
\end{remark}

 \subsection{Proof of the atomic decomposition}
 We now prove the converse part of Theorem \ref{main1}. The proof follows the lines of  \cite{LM1} but requires to be careful since we do not deal anymore with integrable functions. Let  $\varphi\in\mathcal C^{\infty}(\bR^{n})$ be a radial real-valued function, supported in the unit ball $\mathbb B$ centered at $0$, and  satisfying 
\begin{equation}\label{Fourier}
4\pi^2\int^{\infty}_{0}t\left|\xi\right|^{2}|\hat{\varphi}(t\xi)|^{2}dt=1 \qquad{\rm for \; all} \;\; \xi\neq 0\, , \xi\in \R^n
\end{equation}
and the moment conditions. 
 $$\int_{\R^n}\varphi(x) x^\gamma dx=0,\; 0\leq|\gamma|\leq s.$$ 
 This implies that, for   $k=1,\cdots, n$ and for any multi-index $\gamma$ with $|\gamma|\le s+1$, 
$$\int_{\R^n} \partial_k\varphi(x) x^\gamma dx=0.$$
It follows from computations of Fourier transforms that, for $f\in \mathcal S(\R^n)$, one has the identity
\begin{equation}\label{int-repr}
f=-\int_0^{\infty} f*\varphi_t*(\Delta \varphi)_t \frac{dt}{t}.
\end{equation}

Formula \eqref{int-repr} holds as well for $f$ a tempered distribution that vanishes weakly at $\infty$, see \cite{FS}. Namely, the integral from $1/N$ to $N$  in (\ref{int-repr}) tends, as a distribution, to $f$. Recall that a tempered distribution $f$ vanishes weakly at infinity if, for $\psi\in \mathcal S(\R^n)$, $f*\psi_t$ tends to $0$ at $\infty$ as a distribution. It is proved in \cite {HYY} that $f\in \H^\wp(\R^n)$ vanishes weakly at $\infty$, which proves formula  (\ref{int-repr}) for such $f$. It clearly extends to forms in $ \H^{\wp}_d(\R^n, \Lambda^\ell)$.
\smallskip

From now on, $f$ belongs to $ \H^{\wp}_d(\R^n, \Lambda^\ell)$. After integration by parts in \eqref{int-repr}, we apply ${\bf \Delta}$, the Hodge Laplacian defined in \eqref{Hodge}, to $f$. Here ${\bf \Delta}$, as well as $d$ and $\delta$ below, act only on the $x$ variable.
As $df=0$, 
$$f=-\int_0^{\infty} t^2 d (\delta f*\varphi_t)* \varphi_t \frac{dt}{t}.$$
Let us prove  that the function valued in $\Lambda^{\ell}$  given by $t\delta f*\varphi_t$ is in $\mathcal{T}^{\wp}(\R^{n+1}_+)$. Indeed, each coefficient can be written as a linear combination of functions $f*(\partial_k \varphi)_t$, and functions $\partial_k \varphi$  satisfy the assumptions of Lemma \ref{controle2}. So, by Theorem \ref{decomposition_form}, there exist $(\wp,\infty)$ atoms $\A_j$ and a sequence $(\lambda_j)$ with
$$t\delta f*\varphi_t=\sum_j \lambda_j \A_j(\cdot, t).$$
We finally define
$$\a_j(x)=-\int_0^\infty td \A_j(\cdot, t)*\varphi_t  \frac{dt}{t}.
$$
By Lemma \ref{control}, the required estimates of an atomic decomposition are satisfied. It remains to prove that the atoms $\a_j$'s are closed, and that  $\sum_j \lambda_j \a_j$ converges to $f$.

To prove that the atoms are closed, one has to prove that, for $\psi \in\mathcal S(\R^n, \Lambda^{\ell+1})$, $\langle \a_j, \delta \psi\rangle=0$. It is sufficient to prove that, for $\psi \in\mathcal S(\R^n, \Lambda^{\ell})$, 
\begin{equation}\label{commutation}
\langle \a_j, \psi\rangle = -\int_0^\infty t\langle d \A_j(\cdot, t)*\varphi_t, \psi\rangle  \frac{dt}{t},
\end{equation}   that is, integrations commute.
As functions $(\partial_k \varphi)_t$ are in  $\mathcal T^{p'}(\R^{n+1}_+)$, the function  
$$\int_0^\infty | td \A_j(\cdot, t)*\varphi_t|\frac{dt}{t}$$
is bounded because of the duality $\mathcal T^p -\mathcal T^{p'}$. So the double integral in \eqref{commutation} is absolutely convergent and we can exchange integrations as wished. 

Let  us finally prove the convergence in the distribution sense of  $\sum_j \lambda_j \a_j$. We adapt the proof given in \cite{HYY} in formula (4.24). Let $\psi$ be a compactly supported form in $\mathcal S(\R^n, \Lambda^{\ell})$ . Write

\begin{eqnarray*}
\langle f, \psi\rangle & = &-\langle \int_{0}^\infty  t^2  d (\delta f*\varphi_t)* \varphi_t \frac{dt}{t}, \psi\rangle
 = -\langle \lim_{N\rightarrow \infty}\int_{1/N}^N  t^2  d (\delta f*\varphi_t)* \varphi_t \frac{dt}{t}, \psi\rangle\\
&=&- \lim_{N\rightarrow \infty}\int_{1/N}^N  t^2 \langle d (\delta f*\varphi_t)* \varphi_t,  \psi\rangle\frac{dt}{t}
= -\int_{0}^\infty  t^2 \langle d (\delta f*\varphi_t), \varphi_t * \psi\rangle\frac{dt}{t},\\
&=&\lim _{N\rightarrow \infty}  \int_{0}^\infty  t \langle\sum_{j=1}^N\lambda_j \A_j, \varphi_t * \delta\psi\rangle\frac{dt}{t}=\lim _{N\rightarrow \infty} \langle\int_{0}^\infty  t \sum_{j=1}^N\lambda_j d\A_j* \varphi_t \frac{dt}{t}, \psi\rangle\\
&=&\lim _{N\rightarrow \infty} \langle \sum_{j=1}^N\lambda_j\a_j, \psi\rangle,
\end{eqnarray*}
where the first line is only the definition. The first equality of the second line follows from Fubini since $\psi$ is compactly supported and $t^2 \langle d (\delta f*\varphi_t)* \varphi_t$ is locally integrable. So the integral has a limit, which is the integral from $0$ to infinity. First equality in the third line comes from the assumption $\sum_j|\lambda_j|<\infty$ and from duality $\mathcal T^p -\mathcal T^{p'}$. Commutation between the integral and the duality bracket follows  as in \cite{HYY} from the absolute convergence of each term in the sum.

\smallskip

This allows us to conclude for the proof of atomic decomposition.

\begin{remark}
In order to decompose $f\in \H^\wp_d(\R^n, \Lambda^{\ell})$ it is also possible to copy the proof given for Hardy spaces of Musielak-Orlicz type of holomorphic functions \cite{BG}. One can start from the atomic decomposition of $f$ in $\H^\wp(\R^n, \Lambda^{\ell})$ and project each atom on closed forms, where the projection is the orthogonal projection in $L^2$, which may be written in terms of Riesz transforms. Then the projection of each atom is a molecule, so that we find a molecular decomposition of $f$ and not an atomic one.
\end{remark} 
\section{Duality}
Our aim here is to identify the dual space of $ \H^\wp_d(\R^n, \Lambda^{\ell})$ as a space of $BMO$ type. When one does not restrict to closed forms, the duality is obtained as in the scalar case and has been considered in \cite{Ky} when the growth function $\wp$ satisfies the inequality $nq(\wp)<(n+1)i(\wp)$, then, in all generality in \cite{LY}.   For simplicity we assume here that  $nq(\wp)<(n+1)i(\wp)$, so that the only moment condition on atoms is the requirement of zero mean.

With these conditions, the dual of  $\H^\wp(\R^n)$ identifies with $BMO^{\wp}(\mathbb R^{n})$, see \cite{Ky}. Let us write this duality when functions have values in $\Lambda^{\ell}$. For all $\ell$ we first define  $BMO^{\wp}(\mathbb R^{n},\Lambda^{\ell})$ as the space of locally integrable functions $g$ with values in $\Lambda^\ell$ such that
 \begin{equation*}
 \left\|g\right\|_{BMO^{\wp}}:=\sup_{B}\frac{1}{\left\|\chi_{B}\right\|_{L^{\wp}}}\int_{B}\left|g(x)-g_{B}\right|dx<\infty,
 \end{equation*}
 with $\dst g_{B}=\frac{1}{\left|B\right|}\int_{B}g(x)dx$.
 The duality may be written as follows. \\
 Recall that, because of the atomic decomposition, the subspace of functions that are compactly supported and bounded, valued in $\Lambda^{\ell}$, is dense $ \H^\wp(\R^n, \Lambda^{\ell})$.
 \begin{prop}
 Let  $\wp$ be a growth function with $nq(\wp)<(n+1)i(\wp)$. The dual space of $ \H^\wp(\R^n, \Lambda^{\ell})$ identifies with the space 
 $BMO^{\wp}(\mathbb R^{n},\Lambda^{n-\ell})$, with duality given by
 \begin{equation*}
L_g(f):= \langle g, f\rangle:=\int_{\R^n} f\wedge g
 \end{equation*}
 for $f$ compactly supported and bounded, valued in $\Lambda^{\ell}$. 
 \end{prop}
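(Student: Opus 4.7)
The statement is about general $\ell$-forms in $\H^\wp$ (no closedness restriction), so the duality should reduce to the scalar duality $(\H^\wp(\R^n))^* = BMO^\wp(\R^n)$ of \cite{Ky} by decomposing forms coefficient by coefficient. The wedge pairing is designed precisely to pair each coefficient $f_I$ of $f\in \H^\wp(\R^n,\Lambda^\ell)$ with the complementary coefficient $g_{I^*}$ of $g\in BMO^\wp(\R^n,\Lambda^{n-\ell})$, where $I^*$ is the ordered complement of $I$ in $\{1,\dots,n\}$ and $e^I\wedge e^{I^*}=\varepsilon(I)\,e^1\wedge\cdots\wedge e^n$ for some sign $\varepsilon(I)=\pm 1$. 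Identifying $e^1\wedge\cdots\wedge e^n$ with $dx$, this gives
\begin{equation*}
\int_{\R^n} f\wedge g=\sum_{I\in\mathcal I_\ell}\varepsilon(I)\int_{\R^n}f_I(x)\,g_{I^*}(x)\,dx.
\end{equation*}

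\textbf{Step 1: norm equivalences for coefficients.} For $f=\sum_I f_I e^I$ the Euclidean norm satisfies $|f(x)|\sim\sum_I|f_I(x)|$, hence $f^+(x)\sim \sum_I f_I^+(x)$. Using that the Luxemburg quasi-norm on $L^\wp$ is equivalent to a norm when $\wp$ is of positive lower type (or using \eqref{concave} directly), I would deduce
\begin{equation*}
\|f\|_{\H^\wp(\R^n,\Lambda^\ell)}\sim \sum_{I\in\mathcal I_\ell}\|f_I\|_{\H^\wp(\R^n)},
\end{equation*}
and symmetrically $\|g\|_{BMO^\wp(\R^n,\Lambda^{n-\ell})}\sim\sum_J\|g_J\|_{BMO^\wp(\R^n)}$, since the definition of $BMO^\wp$ uses the Euclidean norm $|g(x)-g_B|$.

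\textbf{Step 2: $g\in BMO^\wp$ yields a bounded functional.} For $f$ compactly supported and bounded (hence $f_I\in \H^\wp(\R^n)\cap L^\infty$), apply the scalar duality of \cite{Ky} to each pair $(f_I,g_{I^*})$:
\begin{equation*}
|L_g(f)|\leq \sum_I \Bigl|\int f_I\,g_{I^*}\Bigr|\lesssim \sum_I\|f_I\|_{\H^\wp}\|g_{I^*}\|_{BMO^\wp}\lesssim \|f\|_{\H^\wp(\R^n,\Lambda^\ell)}\|g\|_{BMO^\wp(\R^n,\Lambda^{n-\ell})}.
\end{equation*}
The density of compactly supported bounded forms in $\H^\wp(\R^n,\Lambda^\ell)$ (which follows from the atomic decomposition, as noted just before the statement) allows me to extend $L_g$ to the whole space.

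\textbf{Step 3: converse.} Conversely, given a continuous linear functional $L$ on $\H^\wp(\R^n,\Lambda^\ell)$, for each $J\in\mathcal I_{n-\ell}$ define a scalar functional $\Lambda_J$ on $\H^\wp(\R^n)$ by
\begin{equation*}
\Lambda_J(h):=\varepsilon(J^*)\,L(h\,e^{J^*}),
\end{equation*}
which is bounded by Step 1. The scalar result of \cite{Ky} produces $g_J\in BMO^\wp(\R^n)$ representing $\Lambda_J$. Setting $g=\sum_J g_J e^J$ and using Step 2 computation in reverse, one checks that $L$ and $L_g$ agree on every form $h\,e^{J^*}$, hence on every compactly supported bounded form by linearity, hence everywhere by density. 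The norm estimate $\|g\|_{BMO^\wp(\R^n,\Lambda^{n-\ell})}\sim\|L\|$ follows from combining Step 1 with the scalar case.

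\textbf{Main obstacle.} No serious obstacle is expected: once the scalar duality of \cite{Ky} is granted, the extension to forms is essentially bookkeeping via the complementation $I\mapsto I^*$ and the norm equivalence of Step 1. The only minor care is to check that the hypothesis $nq(\wp)<(n+1)i(\wp)$ carries through so that atoms have only the zero-mean moment condition both in the scalar case and when interpreted coefficient by coefficient for forms, which it does since the hypothesis is on $\wp$ alone and independent of $\ell$.
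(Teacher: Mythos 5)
Your proposal is correct and matches the paper's (implicit) argument: the paper states this proposition without proof, treating it as an immediate consequence of the scalar duality $(\H^\wp)^*=BMO^\wp$ from \cite{Ky} applied coefficient by coefficient, with the wedge pairing matching each $f_I$ to the complementary coefficient $g_{I^*}$ — exactly the bookkeeping you carry out. The only nuance the paper adds is the remark that one could equivalently use the ordinary duality bracket landing in $BMO^\wp(\R^n,\Lambda^\ell)$, the two representations being related by Hodge duality.
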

 We have chosen in this proposition to write the duality by using wedge products and integration of $n$ forms. We could have chosen to extend the duality bracket that we used up to now. If we had made this second choice, an element of the dual would belong to   $BMO^{\wp}(\mathbb R^{n},\Lambda^{\ell})$. Hodge duality allows to pass easily from one representation to the other.

Since $ \H^\wp_d(\R^n, \Lambda^{\ell})$ is a closed subspace of $ \H^\wp(\R^n, \Lambda^{\ell})$, its  dual   identifies with a quotient space mod its annihilator in   $BMO^{\wp}(\mathbb R^{n},\Lambda^{n-\ell})$. We first characterize this last one. 
\begin{lem}\label{dg=0} Let us assume that the growth function $\wp$ satisfies the condition $nq(\wp)<(n+1)i(\wp)$.
For $g\in BMO^\wp(\bR^n,\Lambda^{n-\ell})$,
the corresponding linear form $L_g$ annihilates $ \H^\wp_d(\R^n, \Lambda^{\ell})$ if and only if $dg=0$ in $\bR^n$.
\end{lem}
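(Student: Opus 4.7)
The plan is to reduce both implications to the closed atomic decomposition (Theorem~\ref{main1}) together with an integration by parts via Stokes' theorem. Under the standing hypothesis $nq(\wp)<(n+1)i(\wp)$, closed atoms may be taken with the single moment condition of zero mean.

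For the implication $dg=0\Rightarrow L_g\equiv 0$ on $\H^\wp_d$, I will verify $L_g(\mathfrak a)=0$ on an arbitrary closed atom $\mathfrak a$; the conclusion for a general $f\in\H^\wp_d$ will then follow from continuity of $L_g$ on $\H^\wp$ and Theorem~\ref{main1}. The top-degree case $\ell=n$ is immediate: $g$ is then a function, $dg=0$ forces $g$ to be constant, and the zero-mean property of $\mathfrak a$ gives $L_g(\mathfrak a)=g\int \mathfrak a=0$. For $1\le\ell\le n-1$, I will first mollify by setting $\mathfrak a^\epsilon:=\mathfrak a\ast\rho_\epsilon$ with a standard mollifier $\rho_\epsilon$; each $\mathfrak a^\epsilon$ is then smooth, closed, and compactly supported in a fixed ball $\widetilde B$ slightly larger than $\supp\mathfrak a$. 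Since $H^\ell_c(\bR^n)=0$ for $1\le\ell\le n-1$, the compactly supported Poincar\'e lemma supplies $a^\epsilon\in C^\infty_c(\bR^n,\Lambda^{\ell-1})$ with $da^\epsilon=\mathfrak a^\epsilon$. Stokes' theorem applied to the compactly supported form $a^\epsilon\wedge g$ then gives
\begin{equation*}
L_g(\mathfrak a^\epsilon)=\int_{\bR^n}da^\epsilon\wedge g=(-1)^{\ell}\int_{\bR^n}a^\epsilon\wedge dg=0,
\end{equation*}
the last equality using $dg=0$ in the distributional sense. I will then pass to the limit $\epsilon\to 0$ to conclude $L_g(\mathfrak a)=0$.

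For the converse, I will fix an arbitrary test form $\omega\in C^\infty_c(\bR^n,\Lambda^{\ell-1})$ and use $d\omega$ as a probe element of $\H^\wp_d$. Since $d\omega$ is smooth, compactly supported, and closed (from $d^2=0$), and each of its components is a finite linear combination of partial derivatives of compactly supported smooth functions and therefore has zero integral, $d\omega$ is a constant multiple of a closed $(\wp,\infty,0,\ell)_d$-atom; in particular $d\omega\in\H^\wp_d(\bR^n,\Lambda^\ell)$. The hypothesis therefore yields $L_g(d\omega)=0$, and Stokes applied to the compactly supported form $\omega\wedge g$ gives
\begin{equation*}
0=\int_{\bR^n}d\omega\wedge g=(-1)^{\ell}\int_{\bR^n}\omega\wedge dg.
\end{equation*}
Since $\omega$ is arbitrary, this forces $dg=0$ as a distribution.

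The delicate point will be the passage to the limit $\epsilon\to 0$ in the direct implication. The convergence $\mathfrak a^\epsilon\to\mathfrak a$ holds in $L^q(\widetilde B,\Lambda^\ell)$ with supports in the fixed ball $\widetilde B$, so H\"older's inequality yields $L_g(\mathfrak a^\epsilon)\to L_g(\mathfrak a)$ provided $g\in L^{q'}(\widetilde B)$. This local higher integrability of $BMO^\wp$ functions is the John--Nirenberg type property in the generalized setting and is the only ingredient needed beyond the formal Stokes calculation.
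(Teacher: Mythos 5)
Your argument follows the same route as the paper's: for the ``only if'' direction you test $L_g$ against $d\omega$ with $\omega\in C^\infty_c(\bR^n,\Lambda^{\ell-1})$ (correctly observing that, under $nq(\wp)<(n+1)i(\wp)$, such $d\omega$ is a multiple of a closed atom since only the zero-mean condition is required), and for the ``if'' direction you reduce to closed atoms via Theorem~\ref{main1} and show $\int_{\bR^n}\mathfrak a\wedge g=0$. Your mollification together with the compactly supported Poincar\'e lemma ($H^\ell_c(\bR^n)=0$ for $1\le\ell\le n-1$) and Stokes supplies, in a correct way, exactly the detail that the paper outsources to \cite{LM1}, and your separate treatment of $\ell=n$ is fine.

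There is one step you pass over too quickly. What must be shown is $L_g(\mathfrak a)=0$, where $L_g$ is the continuous extension to $\H^\wp_d$ of the functional $f\mapsto\int f\wedge g$ initially defined only on \emph{bounded} compactly supported $f$. An atom $\mathfrak a\in L^q_\wp(B)$ need not be bounded, so the identity $L_g(\mathfrak a)=\int_{\bR^n}\mathfrak a\wedge g$ is not automatic: one has to approximate $\mathfrak a$ by bounded compactly supported functions converging in $\H^\wp$ and check that the corresponding integrals converge to $\int\mathfrak a\wedge g$, which is done by replacing $g$ by $g-g_B$ (using the zero mean of $\mathfrak a$) and applying H\"older against the John--Nirenberg inequality \eqref{JN}. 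This is precisely the content of the paper's Lemma~\ref{Lg}. You invoke the same local $L^{q'}$ integrability of $g$, but only to justify the limit $\epsilon\to 0$; the identification of the abstract functional value $L_g(\mathfrak a)$ with the concrete integral needs to be stated and proved as well. No new ingredient is required --- the estimate you already cite does the job --- but without this step the computation $\int\mathfrak a\wedge g=0$ does not yet yield $L_g(\mathfrak a)=0$.
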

\begin{proof}
Assume that $\displaystyle\int_{\R^n} f\wedge g=0$ for all bounded functions $f$ with compact support. In particular $\displaystyle\int_{\R^n} d\varphi\wedge g=0$ for all $\varphi\in \mathcal S(\bR^n,\Lambda^{n-\ell-1})$ compactly supported. This implies that $dg=0$.

Conversely, assume that $g\in BMO^{\wp}(\mathbb R^{n},\Lambda^{\ell})$ is such that $dg=0$. Because of the atomic decomposition given in Theorem \ref{main1}, for any $f\in  \H^\wp_d(\R^n, \Lambda^{\ell})$, there exists a sequence $(\a_j)_{j=1}^\infty$ of  $(\wp,q,s, \ell)_d$-atoms, each $\a_{j}$ related to a ball $B_{j}$, with $f=\sum_{j} \lambda_j\a_j$. Moreover, because of the continuity of the linear form $L_g$,  
$$L_g(f)=\sum_j \lambda_j L_g(\a_j).$$ 
So it is sufficient to prove that each term vanishes, that is, $L_g$ annihilates all atoms $\a$. Let us recall some properties of functions of $BMO^{\wp}(\mathbb R^{n})$. 
For    $q>q(\wp)$, the following John-Nirenberg inequality holds (see  \cite{LY}).
\begin{equation}\label{JN} \sup_{B}\frac{1}{\left\|\chi_{B}\right\|_{L^{\wp}}}\int_{B}\left|g(x)-g_{B}\right|^{q'}\wp(x, \|\chi_B\|^{-1}_{L^{\wp}})^{1-q'}\; dx \lesssim \left\|g\right\|_{BMO^{\wp}}^{q'}. \end{equation}
As a consequence, we claim that:
\begin{lem}\label{Lg} Let $q>q(\wp)$.  Let $g$ in $BMO(\R^n)$ and $f$ of zero mean in $L^q_{\wp}(B)$, then 
the action of the linear form $L_g$ on $f$ is given by 
$$L_g(f)= \int_{\R^n} f(x)g(x) dx.$$
\end{lem}
This result extends easily to forms. 
We postpone its proof to end the proof of Lemma \ref{dg=0}. 
Let $\a$ be in $L^q(\R^n, \Lambda^{\ell})$ with compact support and $d\a=0$. As $g$ is locally in  $L^{q'}(\R^n, \Lambda^{n-\ell})$ and $dg=0$, one gets  directly  that 
$$L_g(\a)=\int_{\R^n}\a\wedge g =0,$$
which we wanted to prove.
Details are given in \cite{LM1}, where the particular case $\wp (x, t)=t$ is considered.
\end{proof}
It remains to prove Lemma \ref{Lg}.
\begin{proof}
Without loss of generality we can asssume that $f$ is a $(\wp, 0, q)$-atom. So the left hand side is well defined. The right hand side is well defined because of   \eqref{JN}. Indeed, we can replace $g$ by $g-g_B$ because of the moment condition. We then use H\"older's Inequality with $t= \|\chi_B\|^{-1}_{L^{\wp}}$, so that $\wp (B, t)=1$ (see \cite{Ky}), and write
\begin{eqnarray*} \int_{\R^n} |f(x)(g(x)-g_B)| dx  \hspace{10cm}\\  \qquad\qquad \leq \left(\int_{\R^n} |f(x)|^q\wp(x, t) dx \right)^{\frac 1q} \left( \int_{\R^n} |g(x)-g_B|^{q'}(\wp(x, t))^{1-q'} dx\right)^{1-\frac 1q},\end{eqnarray*} which is uniformly bounded by the definition of atoms for $f$ and \eqref{JN} for $g$.

As a consequence of this last inequality, and using the atomic decomposition in terms of $(\wp, q, 0,\ell)$-atoms,  the linear form $\displaystyle f\mapsto  \int_{\R^n} f(x)g(x) dx$ extends continuously from multiples of atoms to all $f\in \H^\wp(\R^n)$ and coincides with $L_g$ on bounded functions with zero mean and compact support. So it equals $L_g$, which ends the proof.
\end{proof}

Let us now identify the dual of $\H^{\wp}_{d}(\mathbb R^{n},\Lambda^{\ell})$ as a $BMO$ type space mod closed forms. Remark first that, thanks to the atomic decomposition, $L^q$ elements of $\H^{\wp}_{d}(\mathbb R^{n},\Lambda^{\ell})$ with compact support are dense for all $q(\wp)<q<\infty$.

\begin{defn} Let $(\wp, q, 0)$ be an admissible triple, $q'$ the conjugate exponent. For  $1\leq \ell<n$,  ${\widetilde{BM}\overset d O } \,^{\wp,q}(\mathbb R^{n},\Lambda^{\ell})$ is the space, mod closed forms, of locally $q'$-integrable functions $g$ valued in $\Lambda^\ell$ with
 \begin{equation*}
 \left\|g\right\|_{\widetilde{BMO}^{\wp, q}}:=\sup_{B}\inf_{\gamma\in L^{q'}_{\rm loc}(\R^n,\Lambda^\ell), d\gamma=0}\frac{1}{\left\|\chi_{B}\right\|_{L^{\wp}}}\int_{B}\left|g(x)-\gamma(x)\right|^{q'}\wp(x, \|\chi_B\|^{-1}_{L^{\wp}})^{1-q'}\;dx
 \end{equation*}
  finite. \end{defn} 
 The semi-norm of $g$ is equal to $0$ if and only if $g$ coincides locally with a closed form, that is, $g$ is a closed form. It follows from the following proposition that this space does not depend on $q>q(\wp)$. 
   \begin{prop} Let $(\wp, q, 0)$ be an admissible triple, $q'$ the conjugate exponent. Then
 ${\widetilde{BM}\overset d O } \,^{\wp,q}(\mathbb R^{n},\Lambda^{n-\ell})$ identifies with the dual of $\H^{\wp}_{d}(\mathbb R^{n},\Lambda^{\ell})$. Namely, if  $f$ has the atomic decomposition  $f=\sum_j\lambda_j \a_j$ with $(\a_j)_{j=1}^\infty $ a sequence of $(\wp, q, 0, \ell)_d-$atoms and if $g$ is in ${\widetilde{BM}\overset d O } \,^{\wp,q}(\mathbb R^{n},\Lambda^{n-\ell})$,  then 
\begin{equation}
L_g(f)=\sum_j \lambda_j\int_{\mathbb R^{n}} \a_j \wedge g.\label{dualexact}
\end{equation}
\end{prop}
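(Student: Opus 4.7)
The plan is to prove the two implications separately, then verify the equivalence relation on $\widetilde{BMO}^{\wp, q}$. For the sufficient direction, given $g$ with $\|g\|_{\widetilde{BMO}^{\wp, q}}<\infty$, I would first establish the key identity $\int_{\R^n}\a\wedge\gamma=0$ for every $(\wp,q,0,\ell)_d$-atom $\a$ and every closed $\gamma\in L^{q'}_{\rm loc}(\R^n,\Lambda^{n-\ell})$. This is done by regularization: $\a*\phi_\epsilon$ and $\gamma*\phi_\delta$ remain closed under mollification, so on a ball containing the support of $\a$ the Poincar\'e lemma writes $\gamma*\phi_\delta=d\eta$; Stokes' theorem then gives vanishing, after which $\epsilon,\delta\to 0$ passes to the limit via $L^q$-$L^{q'}$ duality. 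Consequently $L_g(\a)=\int_B \a\wedge(g-\gamma_B)$ for any closed $\gamma_B$. Picking $\gamma_B$ nearly achieving the infimum defining $\|g\|_{\widetilde{BMO}^{\wp, q}}$ and applying H\"older's inequality with the weight $\wp(x,\tau_B)$, $\tau_B:=\|\chi_B\|^{-1}_{L^{\wp}}$, together with the atom size condition $\int_B|\a|^q\wp(x,\tau_B)\,dx\leq\|\a\|^q_{L^q_\wp(B,\Lambda^\ell)}\wp(B,\tau_B)$ and the relation $\wp(B,\tau_B)\sim 1$, produces a per-atom bound controlled by $\|g\|_{\widetilde{BMO}^{\wp, q}}$. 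Summing across the atomic decomposition and using the quasi-subadditivity \eqref{concave} of $\wp$ in $t$ yields $|L_g(f)|\lesssim \|g\|_{\widetilde{BMO}^{\wp, q}}\|f\|_{\H^\wp_d}$; formula \eqref{dualexact} then follows from the continuity of $L_g$ together with the convergence of the atomic decomposition in $\H^\wp_d(\R^n,\Lambda^\ell)$.

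For the converse, $\H^\wp_d(\R^n,\Lambda^\ell)$ is a closed subspace of $\H^\wp(\R^n,\Lambda^\ell)$, so Hahn-Banach extends any continuous $L$ on $\H^\wp_d$ to a continuous linear form $\tilde L$ on $\H^\wp(\R^n,\Lambda^\ell)$. The duality $(\H^\wp)^*=BMO^\wp$ asserted in the preceding proposition provides a representative $g\in BMO^\wp(\R^n,\Lambda^{n-\ell})$ with $\tilde L(f)=\int f\wedge g$ on bounded compactly supported $f$. To see that $g$ defines an element of $\widetilde{BMO}^{\wp, q}$, I would insert the constant form $\gamma=g_B\in\Lambda^{n-\ell}$ (automatically closed) into the infimum and invoke the John-Nirenberg inequality \eqref{JN}, obtaining $\|g\|_{\widetilde{BMO}^{\wp, q}}\lesssim \|g\|^{q'}_{BMO^\wp}$. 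Uniqueness modulo closed forms is then precisely Lemma \ref{dg=0}: two representatives give the same $L_g$ on $\H^\wp_d$ if and only if their difference is closed, which matches the quotient relation in the definition of $\widetilde{BMO}^{\wp, q}$. Independence of the space on $q>q(\wp)$ drops out since both descriptions parametrize the same dual.

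The principal obstacle I expect is the per-atom H\"older estimate and its summation in the sufficient direction: one must align the weight $\wp(x,\tau_B)$ appearing in H\"older's inequality with the scale used both in the atom normalization and in the definition of $\|g\|_{\widetilde{BMO}^{\wp, q}}$, and then sum the per-atom bounds in a way compatible with the Musielak-Orlicz control $\|f\|_{\H^\wp_d}$ rather than with a cruder $\ell^1$ sum of the coefficients, forcing careful use of \eqref{concave} and of the upper/lower type bounds on $\wp$. A secondary subtlety is the vanishing $\int\a\wedge\gamma=0$ when $\gamma$ is merely closed in the distribution sense and locally in $L^{q'}$: the Poincar\'e lemma must be applied to a form of low regularity and the resulting integrations must be commuted with the mollification limits.
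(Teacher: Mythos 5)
Your proposal is correct and follows essentially the same route as the paper: the authors likewise reduce the forward direction to the per-atom H\"older estimate of Lemma \ref{Lg} (with the mean $g_B$ replaced by a near-optimal closed form, and the vanishing $\int\a\wedge\gamma=0$ for closed $\gamma$ handled as in Lemma \ref{dg=0}), and obtain the converse via Hahn--Banach, the duality $(\H^\wp)^*=BMO^\wp$, and the John--Nirenberg inequality \eqref{JN}. The only cosmetic difference is that you spell out the regularization argument and the summation over atoms, which the paper defers to \cite{LM1} and to the upper-type-$1$ bound $\sum_j|\lambda_j|\lesssim\|f\|_{\H^\wp}$.
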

\begin{remark}This proposition generalizes Theorem 3.3 in \cite{LM1}.
\end{remark}

\begin{proof}
We do not give a detailed proof since the result follows from slight modifications of the arguments given for Lemma \ref{dg=0}. It is clear that $L_g$ depends only on the equivalent class of $g$ because of the previous lemma. The fact that the sum in \eqref{dualexact} is finite follows from the same arguments as the one given before. Conversely, by Hahn Banach Theorem, a continuous linear form is given by some function $g$ in $BMO^{\wp}(\mathbb R^{n},\Lambda^{n-\ell})$. The equivalence class of $g$ mod closed forms is in ${\widetilde{BM}\overset d O } \,^{\wp,q}(\mathbb R^{n},\Lambda^{n-\ell})$.
\end{proof} 

\begin{remark} The dual space of $\H^{\wp}_{d}(\mathbb R^{n},\Lambda^{\ell})$ identifies also with the space  $BMO^\wp_\delta(\bR^n,\Lambda^{n-\ell})$, where the index $\delta$ stands for the restriction to $\delta$ closed forms. Indeed,  in the equivalence class of a function $g\in BMO^\wp(\bR^n,\Lambda^{n-\ell})$, there is one and only one $\delta$ closed-form: the uniqueness comes from the fact that only constants are such that  $\delta g=dg=0$. On the other hand, any function $g$ may be written as the sum of a $\delta$-closed form and a $d$-closed form: write $f=\delta d{\bf \Delta}^{-1}+d\delta {\bf \Delta}^{-1}f$. Here $\bf \Delta$ is the Hodge Laplacian. The second one is closed. Each of them is in $BMO^\wp(\bR^n,\Lambda^{n-\ell})$ since their coefficients are obtained through products of Riesz transforms.
\end{remark}

The continuity of Riesz transforms on $BMO^\wp(\bR^n)$ is a consequence of their continuity in  $\H^{\wp}(\R^n)$, see \cite{CCYY}. Remark that this implies that there is a bounded projection of $\H^{\wp}(\R^n, \Lambda^{\ell})$ onto the subspace that consists  of closed forms.

\section{Generalized div-curl Lemma }

For the particular growth function $\theta$ defined in (\ref{orlicz}) by
$$\theta(x,t)=\frac{t}{\log(e+|x|)+\log(e+t)},$$ we can state a generalized div-curl lemma (Theorem \ref{main2bis}) and a weak factorization decomposition (see next section), which may be seen as  a weak converse.
\begin{thm}\label{main2bis}{\rm \bf[Generalized div-curl Lemma]}
Let $1\leq \ell, m \leq n-1$. Let $u\in \H^1_d(\bR^{n},\Lambda^{\ell})$ and $v\in BMO_d(\bR^{n},\Lambda^{m})$. Then $u\wedge v$ belongs to $\H^{\log}_{d}(\bR^{n},\Lambda^{\ell+m})$.
\end{thm}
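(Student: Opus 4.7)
The plan is to mimic the scalar-valued div-curl argument of \cite{BGK}, adapted to differential forms. First observe that $u\wedge v$ is automatically closed in the distribution sense, since $d(u\wedge v)=du\wedge v+(-1)^{\ell}u\wedge dv=0$. By Theorem \ref{main1} applied with $\wp(x,t)=t$, decompose $u=\sum_{j}\lambda_{j}a_{j}$, where each $a_{j}$ is a $(1,2,0,\ell)_{d}$-atom related to a ball $B_{j}$ and $\sum_{j}|\lambda_{j}|\lesssim \|u\|_{\H^{1}_{d}}$, with convergence in $\H^{1}_{d}$ and in $\mathcal S'$. It then suffices to produce, for each atom $a$ of $\H^{1}_{d}$ related to a ball $B$, an atomic expansion of $a\wedge v$ in $\H^{\log}_{d}$ with $\mathfrak N_{2}$-quantity controlled by $\|v\|_{BMO^{+}}$ uniformly in $a$; the bound for $u\wedge v$ then follows by concatenating these expansions against the coefficients $\lambda_{j}$ and invoking Theorem \ref{main1} once more.

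For the individual estimate, fix an $\H^{1}_{d}$-atom $a$ related to the ball $B$ and split $v$ on the enlargement $2B$ as $v=c_{B}+(v-c_{B})$, where $c_{B}:=\tfrac{1}{|2B|}\int_{2B}v$ is a constant (hence closed) form. By a standard telescoping argument for $BMO^{+}$, $|c_{B}|\lesssim \bigl(1+\log(e+|x_{B}|+r_{B})\bigr)\|v\|_{BMO^{+}}$. The constant part $a\wedge c_{B}$ then has components that are scalar multiples of the scalar atoms $a_{I}$, and the logarithmic growth of this multiplier is precisely what the weight $\theta(x,t)=t/(\log(e+|x|)+\log(e+t))$ is designed to absorb: $a\wedge c_{B}$ becomes a bounded multiple of a single $(\theta,2,0,\ell+m)_{d}$-atom whose $\mathfrak N_{2}$-contribution is $\lesssim \|v\|_{BMO^{+}}$.

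For the fluctuation $v-c_{B}$, we exploit the closedness of $v$ and the contractibility of $2B$: Poincar\'e's lemma (in the Bogovski/Iwaniec form, with $L^{q}$ bounds) yields an $(m-1)$-form $w$ on $2B$ with $dw=v-c_{B}$ and $\|w\|_{L^{q}(2B)}\lesssim r_{B}\|v-c_{B}\|_{L^{q}(2B)}$, the right-hand side being bounded by $r_{B}|B|^{1/q}\|v\|_{BMO}$ by the John--Nirenberg inequality. Since $da=0$,
\[
a\wedge(v-c_{B})=a\wedge dw=(-1)^{\ell+1}\,d(a\wedge w),
\]
so the fluctuation piece is \emph{exact} on $2B$, represented by the potential $a\wedge w$ supported in $2B$ with good $L^{q}$-size. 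A Calder\'on--Zygmund splitting of the coefficients of $a\wedge w$ at a threshold tailored to the weight $\theta$, followed by differentiation, reorganizes $d(a\wedge w)$ as a sum of \emph{closed} $(\theta,2,0,\ell+m)_{d}$-atoms supported in balls inside $2B$, with controlled $\mathfrak N_{2}$-quantity.

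The delicate step is the last one: the exact distribution $d(a\wedge w)$ is not, a priori, a sum of closed $\H^{\log}_{d}$-atoms, and one must arrange the Calder\'on--Zygmund decomposition so that each summand is simultaneously (i) closed, (ii) supported in a ball contained in $2B$, and (iii) satisfies the refined size and moment bounds adapted to the non-doubling Musielak-Orlicz weight $\theta$, whose logarithmic $x$-dependence demands careful tracking of the spatial location of every piece. This bookkeeping is the form-valued analogue of the scalar construction of \cite{BGK} and is where the main technical effort of the proof is concentrated.
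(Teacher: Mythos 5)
Your strategy (atomic decomposition of $u$, then an atom-by-atom analysis of $a\wedge v$) is not the paper's route, and it has a gap that is fatal for reaching the sharp space $\H^{\log}$: the concatenation step. The individual estimates are essentially fine. Indeed, the fluctuation piece $a\wedge(v-c_B)=\pm d(a\wedge w)$ is exact with potential supported in $\supp a$, hence has vanishing integral and controlled $L^r$ size, so it is already $\|v\|_{BMO}$ times an $\H^1_d$-atom --- no Calder\'on--Zygmund splitting is needed, and these pieces sum in $\H^1\hookrightarrow\H^{\log}$. (Note, though, that your bound on $c_B$ is wrong for small balls: the telescoping from the unit cube gives $|c_B|\lesssim\bigl(\log(e+|x_B|)+\log(e+1/r_B)+\log(e+r_B)\bigr)\|v\|_{BMO^+}$, and the $\log(1/r_B)$ term is essential.) The real problem is the constant piece: each $\lambda_j\,a_j\wedge c_{B_j}$ is indeed $\approx|\lambda_j|$ times a $(\theta,q,0)$-atom, but to ``invoke Theorem \ref{main1} once more'' you must bound the joint modular $\sum_j\theta\bigl(B_j,|\lambda_j|\,\|\chi_{B_j}\|^{-1}_{L^\theta}/\lambda\bigr)$, and this is \emph{not} controlled by $\lambda^{-1}\sum_j|\lambda_j|$: since $\theta$ is of uniformly upper type $1$ but not of uniformly lower type $1$, one only gets
\begin{equation*}
\theta(B_j,s\,t_j)\approx s\,\frac{\log(e+|x_{B_j}|)+\log(e+t_j)}{\log(e+|x_{B_j}|)+\log(e+s\,t_j)}\ \theta(B_j,t_j),\qquad s=|\lambda_j|/\lambda\le 1,
\end{equation*}
and the ratio can be as large as $\log t_j$ when $s\,t_j\sim 1$. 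Summing therefore loses a logarithm and lands in a strictly larger space (essentially the one of \cite{BIJZ}), not in $\H^{\log}$. The paper's own Remark after Theorem \ref{main2} records exactly this obstruction: $\sum_k\|u_k\|_{\H^1}\|v_k\|_{BMO^+}<\infty$ does not imply $\sum_k u_k\wedge v_k\in\H^{\log}$. So your argument, as written, does not close.

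This is why the paper proves Theorem \ref{main2bis} by a completely different, non-atomic argument: after reducing to $\ell+m=n$ by wedging with $dx_I$, it writes $u=d{\bf \Delta}^{-1/2}w$, so the coefficients of $u$ are Riesz transforms $R_j(w_I)$ of $\H^1$ functions, and then applies the global wavelet paraproduct decomposition $bh=S(b,h)+T(b,h)$ of \cite{BGK}, where $T$ maps $BMO\times\H^1$ into $\H^{\log}$ outright. The $L^1$-valued part $S$ is handled by the div-curl cancellation: since $dv=0$ one has $\sum_{j,J}S(w_I,R_j(v_J)e^j)\wedge e^J=0$, which converts $S(R_j(w_I),v_J)$ into the commutator $S(R_j(w_I),v_J)+S(w_I,R_j(v_J))$, shown in \cite{BGK} to lie in $\H^{\log}$. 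If you want to rescue an atomic proof you would need a genuinely joint estimate of the modular over all atoms simultaneously, not a sum of individual atomwise bounds.
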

\begin{proof}[Proof of Theorem \ref{main2bis}] The wedge product is taken in the distribution sense, as defined in \cite{BIJZ}. By eventually taking wedge products with forms $dx_I$, it is sufficient to consider the case when $\ell+m=n$.  Since the Hodge Laplacian ${\bf \Delta}$ commutes with $d$, we can write $u={\bf \Delta} ^{-1}d\delta u=d{\bf \Delta}^{-1/2}w$ with $w={\bf \Delta}^{-1/2}\delta u$.  As the operator $ {\bf \Delta}^{-1/2}\delta $ may be easily written in terms of Riesz transforms, $w$ is a $\ell-1$ form with coefficients in $\H^1(\bR^n)$. Hence
$$u=\sum_{j, I} R_j( w_I) e^j \wedge e^I$$
with $w_I\in   \H^1(\bR^n)$.    
The rest of the proof is an easy adaptation of the particular case of the div-curl lemma proved in \cite{BGK}. We first recall the notations and main result of \cite{BGK}. There exist two linear operators $S$ and $T$ defined in terms of a wavelet basis so that
\begin{eqnarray*}
S&:&\, BMO(\mathbb R^{n})\times\H^{1}(\mathbb R^{n}) \rightarrow L^{1}(\mathbb R^{n}),\\
T&:&\, BMO(\mathbb R^{n})\times\H^{1}(\mathbb R^{n})\rightarrow \H^{\log}(\mathbb R^{n})
\end{eqnarray*} and 
$$bh=S(b,h)+T(b,h).$$ Now, one can write 
\begin{eqnarray*}
u\wedge v&=&d{\bf \Delta}^{-1/2} w\wedge v=\sum_{j\cup I\cup J=\{1,\dots,n\}} R_j (w_I)  v_Je^j\wedge e^I\wedge e^J\\
&=&\sum_{j\cup I\cup J=\{1,\dots,n\}}(S( R_j (w_I), v_J)+T( R_j( w_I),v_J))e^j\wedge e^I \wedge e^J\\
&=&\sum_{j\cup I\cup J=\{1,\dots,n\}}(S( R_j( w_I), v_J)+S(w_I,R_j(v_J))+T( R_j (w_I),v_J))e^j\wedge e^I\wedge e^J.
\end{eqnarray*}
The last equality holds since, as $dV=0$,
$$0=\sum_{j, J}S(w_I,R_j(v_J)e^j)\wedge e^J.$$
It is proved in \cite{BGK} that  $S( R_j (w_I), v_J)+S(w_I,R_j(v_J))$ and $T( R_j (w_I),v_J)$ belong to $\H^{\log}(\bR^n)$. This allows to conclude.\end{proof}
\medskip
\section{Weak Factorization} 
Let us now consider weak factorization.
\begin{thm}\label{main2}{\rm \bf [Weak factorization]}
Let $1\leq \ell, m \leq n-1$. Any  $f\in\H^{log}(\bR^{n},\Lambda^{\ell+m})$ such that $df=0$ can be written as
\begin{equation*}
f=\sum^{\infty}_{k=0}u_{k}\wedge v_{k}\text{ in the sense of distributions}
\end{equation*}
with

$u_{k}\in \H^1(\bR^{n},\Lambda^{\ell})$ and $v_{k}\in BMO(\bR^{n},\Lambda^{m})$, $du_k=0$ and $dv_k=0$.
Moreover, 
\begin{equation}
\sum_k \|u_{k}\|_{\H^1(\bR^{n},\Lambda^{\ell})}\| v_{k}\|_{BMO^+(\bR^{n},\Lambda^{m})}\leq C\|f\|_{\H^{\log}(\bR^{n},\Lambda^{\ell+m})}.
\end{equation}
\end{thm}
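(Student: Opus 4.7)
My plan follows a two-step reduction: first invoke Theorem \ref{main1} to decompose $f$ atomically, then produce an explicit finite wedge-product factorization of each atom with uniformly bounded sums of $\|u_k\|_{\H^1}\|v_k\|_{BMO^+}$. I work with the admissible triple $(\theta, q, 0)$ for some $q\in(q(\theta),\infty)$; this is allowed because $\theta$ satisfies $nq(\theta)<(n+1)i(\theta)$, so the only moment condition on atoms is the zero mean. Applying Theorem \ref{main1} yields $f=\sum_j\a_j$ in $\mathcal S'$, where each $\a_j$ is a multiple of a $(\theta,q,0,\ell+m)_d$-atom on a ball $B_j=B(x_j,r_j)$ and $\N_q(\{\a_j\})\lesssim\|f\|_{\H^{\log}}$. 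Granted the per-atom factorization below, summing the decompositions over $j$ and using the sub-additivity \eqref{concave} of $\theta$ together with the atomic control yields the required norm inequality; convergence in $\mathcal S'$ is inherited from the convergence of the atomic sum in $\H^{\log}_d$.

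The per-atom claim is that every unit closed atom $\a$ on a ball $B=B(x_0,r)$ admits a finite decomposition $\a=\sum_{k=1}^N u_k\wedge v_k$ with $u_k\in\H^1_d(\bR^n,\Lambda^\ell)$, $v_k\in BMO_d(\bR^n,\Lambda^m)$, $N=N(n,\ell,m)$ independent of $\a$, and $\sum_k\|u_k\|_{\H^1}\|v_k\|_{BMO^+}\lesssim 1$. Since the pointwise size of $\a$ on $B$ is of order $\|\chi_B\|^{-1}_{L^\theta}\sim(\log(e+|x_0|)+\log(e+r^{-n}))/|B|$, while an $\H^1$-atom on $B$ has size $1/|B|$, the logarithmic factor must be entirely carried by the $BMO$ component. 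For each ball $B$ I would construct a closed $BMO$ $m$-form $V_B$ whose pointwise values on $B$ are of order $\log(e+|x_0|)+\log(e+r^{-n})$ and whose $BMO^+$ norm is $O(1)$, then exploit it to convert the natural Poincar\'e-type decomposition into the required wedge product: by the Poincar\'e lemma on $2B$, write $\a=d\beta$ with $\beta$ a compactly supported $(\ell+m-1)$-form of size $\lesssim r\,\|\chi_B\|^{-1}_{L^\theta}$, expand $\beta=\sum_I\beta_I e^I$, and split each $I=I_1\sqcup I_2$ with $|I_1|=\ell-1$ and $|I_2|=m$ to obtain
\begin{equation*}
\a=\sum_I\pm(d\beta_I\wedge e^{I_1})\wedge e^{I_2},
\end{equation*}
a finite sum of wedges of an exact $\ell$-form with a constant $m$-form. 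The logarithmic growth is then transferred from the $\ell$-part to the $m$-part by substituting appropriate pieces of $V_B$ for the $e^{I_2}$ factors while compensating in the other factor so that it becomes a genuine closed $\H^1$-atom on a dilate of $B$.

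The principal obstacle is the construction of $V_B$: a closed $BMO$ $m$-form whose pointwise size on $B$ captures the full logarithmic factor $\log(e+|x_0|)+\log(e+r^{-n})$, whose $BMO^+$ norm is uniformly $O(1)$, and which allows the compensation step in the $\ell$-form factor to produce a bona fide $\H^1$-atom. As emphasised in the introduction, this is precisely the requirement of finding the $BMO$ factor corresponding to the size and position of the support of each atom. The $\log(e+|x_0|)$ contribution can be built from the global $BMO$ function $\log(e+|\cdot|)$ wedged with constant $(m-1)$-forms; the $\log(e+r^{-n})$ contribution requires localised logarithmic forms adapted to $B$ (for instance derived from cut-off versions of $\log|\cdot-y|$ for suitably chosen $y$) combined with constant forms. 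Once $V_B$ is produced and the compensation step is verified with constants uniform in $B$, the per-atom factorization yields $\sum_k\|u_k\|_{\H^1}\|v_k\|_{BMO^+}\lesssim 1$, and Theorem \ref{main2} follows by the assembly of the first paragraph.
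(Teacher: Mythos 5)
Your overall architecture (atomic decomposition via Theorem \ref{main1}, then a per-atom finite wedge factorization with uniformly bounded $\sum\|u_k\|_{\H^1}\|v_k\|_{BMO^+}$, then reassembly using $\sum_\tau|\lambda_\tau|\lesssim 1$) matches the paper, and you correctly locate the heart of the matter in the construction of a closed $BMO$ form whose size on $B$ absorbs the factor $\log(e+|x_B|)+|\log r_B|$. But that construction is exactly what you do not carry out: you state ``the principal obstacle is the construction of $V_B$'' and then assume it done. This is the main step of the proof, not a technicality, so the argument as written has a genuine gap. The missing ingredients are concrete. First, closedness: the paper obtains a closed $BMO$ $1$-form by taking $G_k\,dx_k$ with $G_k$ a function of the single variable $x_k$ (after rotating so all coordinates of $x_B$ have modulus $|x_B|/\sqrt n$); your ``closed $BMO$ $m$-form $V_B$'' is never shown to exist in a form compatible with the wedge structure. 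Second, the uniform $BMO^+$ bound is delicate because $BMO^+$ includes the $L^1$ norm on the unit cube: a function equal to $\log(e+|x_B|)$ near $B$ must decay away from $B$, which is why the paper takes $G=\min(\log(e+2|x|),\log(e+|x_B|))$ or $G=\min(\log(e+|x-x_B|^{-1}),\log(e+r_B^{-1}))$; your sketch (``cut-off versions of $\log|\cdot-y|$'') gestures at this but does not verify it.

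Third, and most importantly, your ``compensation step'' hides a real case distinction. When $r_B\leq\min(1,|x_B|/2)$ the paper arranges $G$ to be \emph{constant} on $B$, so no compensation is needed at all: $u_k$ is literally a $q$-atom of $\H^1_d$ supported in $B$ and $u_k\wedge G_k\,dx_k$ reproduces $a_k$ exactly. When instead $r_B\geq\min(1,|x_B|/2)$ (large balls containing or near the origin) no such $G$ constant on $B$ with bounded $BMO^+$ norm exists, and one must genuinely compensate by setting $u=d(G^{-1}\psi)$, $v=G\,dx_k$ with $G(x)=\log(e+x_k^2)$; the Leibniz term $G^{-2}dG\wedge\psi$ then forces one to settle for an $r$-atom with $r<q$, verified via H\"older against $\int_BG^{-s}$ and $\int_BG^{-2s}(1+|x|)^{-1}$. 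Your proposal does not distinguish these regimes, and the uniform-in-$B$ verification of the compensation you invoke would fail if attempted with a single construction. Until the two constructions of the $BMO$ factor and the corresponding atom estimates are written out, the proof is incomplete.
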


\medskip

 \begin{proof} The proof follows the same scheme as in \cite{LM1} but requires 
first to build adapted functions  of bounded mean oscillation. We consider separately small balls or balls that are far from the origin on one hand, large balls that contain the origin or are close to contain it, on the other hand.
 
 Recall that, for $B$ a ball in $\R^n$, we denote by $x_B$ its center  and  $r_B$ its radius.
 \smallskip
 
\subsection{Proof in the case I: $r_B\leq \min(1, \frac{|x_B|}2)$.} 
 
 \begin{lem}\label{G_BMO} Let $B$ be a ball  in $\R^n$ with center $x_B$ and radius $r_B$ with $r_B\leq \min(1, \frac{|x_B|}2)$. There exists a function $G$ in $BMO$ with the following properties 
 	\begin{enumerate}
 	\item $G$ is  constant on $B$,
 	\item $G(x)\geq C^{-1}(\log(e+|x_B|)+|\log(r_B)|)$ for all $x\in B,$
 	\item $\|G\|_{BMO^+} \leq C$
 	\end{enumerate} 
	for some constant independent of $B$.
\end{lem}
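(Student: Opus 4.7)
The plan is to take the explicit truncated logarithm
\[
G(x) := \max\Bigl(\,0,\ \log\frac{e+|x_B|}{\max(|x-x_B|,\,r_B)}\Bigr),
\]
centered at $x_B$, cut off below at the scale $r_B$ and above at the scale $L:=e+|x_B|$. Properties (1) and (2) of the lemma fall out immediately: on $B$ one has $\max(|x-x_B|,r_B)=r_B$, so $G\equiv\log(L/r_B)=\log(e+|x_B|)+|\log r_B|$, and in particular $G$ is a positive constant there (using $r_B\le 1$), with exactly the lower bound asked for (constant $1$).

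For the BMO seminorm I would write $G=(\log L-\log\max(|x-x_B|,r_B))_+$ and invoke the classical uniform estimate that $x\mapsto\log\max(|x-x_B|,r_B)$ lies in $BMO(\mathbb R^n)$ with seminorm bounded by a dimensional constant independent of $(x_B,r_B)$, together with the fact that BMO is stable under $h\mapsto(c-h)_+$ with control on the seminorm. A direct mean-oscillation check on balls $B(y,r)$ works as well, splitting into three regimes according to whether $r\le r_B$ (where $G$ is locally either constant or coincides with a translated pure logarithm), $r_B<r<L$ (the familiar John--Nirenberg computation for $\log|\cdot|$), or $r\ge L$ (where $G$ is essentially supported in a small fraction of $B(y,r)$, so the mean oscillation is easily bounded).

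For the $L^1$ piece $\int_{(0,1)^n}|G|$, I would split on the size of $|x_B|$. When $|x_B|\ge 10$, every $x\in(0,1)^n$ satisfies $|x-x_B|\sim|x_B|$, so $L/|x-x_B|=1+O(1/|x_B|)$ and $G(x)=O(1/|x_B|)$ uniformly; the integral is then $O(1)$. When $|x_B|\le 10$, the factor $L$ is bounded and
\[
\int_{(0,1)^n} G(x)\,dx\ \le\ \int_{|y|\le 11}\log^+\frac{C}{|y|}\,dy,
\]
which is finite since $\log(1/|y|)$ is locally integrable. The only real obstacle I expect is keeping the BMO seminorm estimate uniform in both $x_B$ and $r_B$ simultaneously; this is handled cleanly by the uniform BMO bound for truncated logarithms, a variant of the prototypical fact that $\log|x|$ is a BMO function with dimensional seminorm.
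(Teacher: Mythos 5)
Your construction is correct and rests on essentially the same idea as the paper's proof: a truncated logarithm that is constant on $B$ at the right height, with the $BMO$ seminorm controlled by the stability of $BMO$ under taking minima/maxima with constants applied to $\log|\cdot|$, and the $\int_{(0,1)^n}|G|$ term checked separately. The only difference is organizational: the paper splits into the two regimes $1/r_B\le |x_B|/2$ and $1/r_B>|x_B|/2$, using $\min(\log(e+2|x|),\log(e+|x_B|))$ in the first and $\min(\log(e+|x-x_B|^{-1}),\log(e+r_B^{-1}))$ in the second (each capturing one of the two terms of the lower bound, the other being comparable in that regime), whereas your single formula $G(x)=\max\bigl(0,\log\frac{e+|x_B|}{\max(|x-x_B|,r_B)}\bigr)$ handles both scales simultaneously and yields the lower bound with constant $1$.
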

 		
 \begin{proof}  If $\frac 1{r_B}\le \frac {|x_B|}2$, it is sufficient to find a function bounded from below by $c\log(e+|x_B|)$.  We take
 \begin{equation}\label{deux}
 G(x)= \min (\log (e+2|x|), \log(e+|x_B|)).
 \end{equation}
 which is clearly constant and equal to $\log(e+|x_B|)$ on $B$. It belongs to $BMO$ as the minimum of two $BMO$ functions (see Garnett's book \cite{G}). Using the bound $\log (e+2|x|)$ allows us to get that the integral of $G$ on the unit cube is bounded. Hence,  $\|G\|_{BMO^+} \leq C$.
 
 If $\frac{|x_B|}2<\frac{1}{r_B}$, we take
  \begin{equation}\label{trois}
 G(x)= \min (\log (e+|x-x_B|^{-1}), \log(e+r_B^{-1})).
 \end{equation}
Analogous arguments allow us to show that $G$ satisfies the required properties.
  \end{proof}
 Let us now consider  the factorization of atoms in this particular case.
 
 \begin{lem}\label{lem-fact}
 Let $1\leq\ell, m\leq n-1$ and $\a$ be a $(\wp, q, 0)-$atom of $\H_d^{\log}(\R^n, \Lambda^{\ell+m})$ related to a ball $B$ satisfying $r_B\leq \min(1, \frac{|x_B|}2)$.  
 
 Then, $\a$ may be written as a  sum of $\frac{n!}{(\ell+m-1)!(n-\ell-m+1)!}$ terms 
  $u_j\wedge v_j$, with $u_j$ a $q$-atom of $\H^1_d(\bR^{n},\Lambda^{\ell})$ and $\|v_j\|_{BMO^+(\bR^{n})}$ uniformly bounded.
 \end{lem}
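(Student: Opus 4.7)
The plan is to exploit the BMO function $G$ constructed in Lemma~\ref{G_BMO} in order to absorb into the BMO factor the extra logarithmic size that distinguishes an $\H^{\log}$-atom from an $\H^1$-atom. Since $r_B\le|x_B|/2$, the weight $\wp=\theta$ satisfies $\wp(x,t)\sim\wp(x_B,t)$ on $B$, and a direct computation gives $\|\chi_B\|_{L^{\wp}}\sim|B|/N_B$ where $N_B:=\log(e+|x_B|)+|\log r_B|$; the atom condition (ii) therefore yields
$$\|\a\|_{L^q(B,\Lambda^{\ell+m})}\lesssim N_B\,|B|^{1/q-1},$$
a factor $N_B$ larger than the scale of a classical $\H^1$-atom associated with $B$.

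First I would apply a Bogovskii-type (compactly supported Poincar\'e) operator for closed forms on the ball to produce an $(\ell+m-1)$-form $\beta$, supported in $B$, with $d\beta=\a$ and $\|\nabla\beta\|_{L^q}\lesssim\|\a\|_{L^q}\lesssim N_B\,|B|^{1/q-1}$. When $\ell+m<n$ no integrability condition on $\a$ is needed, and when $\ell+m=n$ the condition $\int\a=0$ is precisely the $s=0$ moment condition. Let $G$ be the BMO function from Lemma~\ref{G_BMO}, so that $G$ is identically a constant $G_B$ on $B$ with $G_B\gtrsim N_B$ and $\|G\|_{BMO^+}\lesssim 1$. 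Setting $\tilde\beta:=\beta/G_B$, one has $\tilde\beta$ supported in $B$ with $\|\nabla\tilde\beta\|_{L^q}\lesssim|B|^{1/q-1}$, and the key identity
$$\a=G\,d\tilde\beta\qquad\text{on all of }\R^n$$
holds, since both $\tilde\beta$ and $\a$ are supported where $G\equiv G_B$.

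Next I would unpack $d\tilde\beta$ into a sum of wedge products of closed forms. Writing $\tilde\beta=\sum_{|L|=\ell+m-1}\tilde\beta_L\,e^L$ and, for each $L=\{l_1<\cdots<l_{\ell+m-1}\}$, splitting it canonically into $K_1(L):=\{l_1,\ldots,l_{\ell-1}\}$ and $K_2(L):=\{l_\ell,\ldots,l_{\ell+m-1}\}$ so that $e^L=e^{K_1(L)}\wedge e^{K_2(L)}$, one obtains
$$\a=G\,d\tilde\beta=\sum_{|L|=\ell+m-1}u_L\wedge v_L,\quad u_L:=d\tilde\beta_L\wedge e^{K_1(L)},\quad v_L:=G\,e^{K_2(L)},$$
a sum of exactly $\binom{n}{\ell+m-1}$ terms, matching the count in the statement. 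Each $u_L$ is a closed $\ell$-form (since $d(d\tilde\beta_L)=0$ and $de^{K_1(L)}=0$), is supported in $B$, has coefficients of vanishing integral (they are partial derivatives of compactly supported functions), and satisfies $\|u_L\|_{L^q}\lesssim\|\nabla\tilde\beta\|_{L^q}\lesssim|B|^{1/q-1}$; hence, after absorbing a uniform multiplicative constant into $v_L$, it is a $q$-atom of $\H^1_d(\R^n,\Lambda^{\ell})$ associated with $B$. The $m$-form $v_L=G\,e^{K_2(L)}$ has scalar coefficient $G$, so $\|v_L\|_{BMO^+}\lesssim\|G\|_{BMO^+}\lesssim 1$.

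The main obstacle is the first step: producing the primitive $\beta$ with simultaneous compact support in $B$ and the sharp gradient bound $\|\nabla\beta\|_{L^q}\lesssim\|\a\|_{L^q}$. This is the form-valued analogue of Bogovskii's solution of the divergence equation, classical on star-shaped domains. Once that solution operator is in hand, the remainder of the argument is algebraic bookkeeping in which the $N_B$-gain obtained by dividing $\beta$ by $G_B$ is reintroduced, with bounded $BMO^+$-norm, by multiplying back by the function $G$.
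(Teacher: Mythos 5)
Your overall mechanism is the same as the paper's: solve $d\beta=\a$ with $\beta$ compactly supported in $B$ and $\|\nabla\beta\|_{L^q}\lesssim\|\a\|_{L^q}$ (the paper also invokes \cite{Sc} for this), observe that the atom normalization costs exactly a factor $N_B=\log(e+|x_B|)+|\log r_B|$ relative to an $\H^1$-atom, divide $\beta$ by the constant value $G_B\gtrsim N_B$ of the function from Lemma \ref{G_BMO}, and reinstate that factor by multiplying one wedge factor by $G$ itself. Your bookkeeping of the $\binom{n}{\ell+m-1}$ terms, the verification that each $u_L=d\tilde\beta_L\wedge e^{K_1(L)}$ is a closed $q$-atom, and the identity $\a=G\,d\tilde\beta=\sum_L u_L\wedge v_L$ are all correct.

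There is, however, one concrete defect: your BMO factors $v_L=G\,e^{K_2(L)}$ are not closed forms, since $dv_L=dG\wedge e^{K_2(L)}$ and the function $G$ of Lemma \ref{G_BMO} (built from $|x|$ or $|x-x_B|$) depends on all the variables. The literal wording of Lemma \ref{lem-fact} only demands a $BMO^+$ bound on $v_j$, but the lemma is used to prove Theorem \ref{main2}, whose conclusion asserts $dv_k=0$; closedness of \emph{both} factors is what makes the factorization a converse of the generalized div-curl lemma, so with your $v_L$ the downstream argument breaks. The paper avoids this by first rotating coordinates so that all components of $x_B$ have modulus $|x_B|/\sqrt n$, and then applying (a one-dimensional version of) Lemma \ref{G_BMO} to the projected interval, producing for each $k$ a function $G_k$ depending \emph{only} on $x_k$, constant $\gamma_k\gtrsim N_B$ on $B$, with $\|G_k\|_{BMO^+}\lesssim 1$; the factor $v=G_k\,dx_k\wedge dx_{I''}$ is then closed because $dG_k$ is proportional to $dx_k$ and $k$ belongs to the index set of the BMO block. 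Your argument is repaired by the same device: choose one index $k\in K_2(L)$, replace $G$ by $G_k(x_k)$, and replace $G_B$ by $\gamma_k$ in the definition of $\tilde\beta$ for that term (note that this forces you to treat the terms $a_L$ separately rather than dividing the single primitive $\beta$ by one global constant).
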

 
 \begin{proof} We first give the proof for $\ell=m=1$ and prove that it implies the general case. The $2$-form atom $\a$ is assumed to be supported in a ball $B$, of center $x_B$ and radius $r_B$. As the whole problem is invariant by rotation, we can, without loss of  generality, assume that all coordinates of $x_B$ have same modulus, equal to $(\sqrt n)^{-1}|x_B|$. If we apply Lemma \ref{G_BMO} in one variable, for the projected ball, we find that, for each $k$, there exists a function $G_k$, which depends only on $x_k$, and satisfies the same properties as $G$. We call $\gamma_k$ its (constant) value on $B$.
 
 The atom $\a$ can be written as the exterior derivative of a $1$-form. Indeed, for $n>2$ we use the fact that $d$ is closed, while for $n=2$ we use the fact that it is of mean $0$. More precisely, we can write  $\a=d\mathfrak b$, where $\mathfrak b$ is also supported in $B$ and has the  $L^{ q}$ norm of its gradient  bounded (up to a uniform constant $C$) by the $L^q$ norm of $\a$ (see \cite{Sc}). Hence  $$\a=\sum_j\sum_k \frac{\partial \varphi_k}{\partial x_j} dx_j\wedge dx_k:=\sum_k{a_k},$$
 with $a_k:=\sum_j \frac{\partial \varphi_k}{\partial x_j} dx_j\wedge dx_k$. Moreover,
 $$\|a_k\|_{L^q(B)}\le \frac {\log(\frac{1}{r_B} + r_B)+\log(e+|x_B|)}{|B |^{1-1/q}}\leq \frac{C\gamma_k}{|B|^{1-1/q}}.$$
 For each fixed $k$, we will write $a_k$ as a product, as required.  We take
  $$u_k=\gamma_k^{-1}\sum_j \frac{\partial \varphi_k}{\partial x_j} dx_j, $$
 which is clearly a closed form as a differential. It is (up to a uniform constant) an atom of $\H_d^{1}(\R^n, \Lambda^{1}).$
 We then take $v_k=G_k dx_k$. It is a closed form since $G_k$ depends only on the variable $x_k$.  On the ball $B$, which contains the support of $u_k$, it is equal to $\gamma_k dx_k$, so that $$u_k\wedge v_k=\sum_j \frac{\partial \varphi_k}{\partial x_j} dx_j\wedge dx_k.$$
 This concludes the proof for $\ell=m=1$.
 
 In the general case, we write 
 $$\a=\sum_j\sum_I \frac{\partial \varphi_I}{\partial x_j} dx_j\wedge dx_I:=\sum_I a_I$$
 with $I$ of length $\ell+m-1$  and $\varphi_I$ supported in $B$. Moreover we can assume that the coefficients $a_I$'s satisfy the same inequality as those of $\a$. We decompose each term into a wedge product,
  the number of terms being equal to the number of possible choices for $I$.  If we choose the index $k$ as the larger index in $I$  and write  $I=I'\cup I'' \cup \{k\}$ with $|I'|=\ell-1$ and $|I''|=m-1$, the two-form  
$$ \sum_j \frac{\partial \varphi_I}{\partial x_j} dx_j\wedge dx_k$$
is a closed form, which we can write as a wedge product $u\wedge v$.  Then $a_I$ itself is (up to the sign) the wedge product $(u\wedge dx_{I'})\wedge (v\wedge dx_{I''}).$ It is clear that each term is closed. Estimates are straightforward.
\end{proof}

\subsection{Proof in the case II: $r_B\geq \min(1, \frac{|x_B|}2)$.}

The analogous of Lemma \ref{lem-fact} is the following.
 \begin{lem}\label{lem-fact2}
 Let $1\leq\ell, m\leq n-1$ and $\a$ be a $(\wp, q, 0)-$atom of $\H_d^{\log}(\R^n, \Lambda^{\ell+m})$ related to a ball $B$ with $r_B\geq \min(1, \frac{|x_B|}2)$. Then, for $1<r<q$, the atom $\a$ may be written as a  sum of $\frac{n!}{(\ell+m-1)!(n-\ell-m+1)!}$ terms 
  $u_j\wedge v_j$, with $u_j$ anŽŽ $r-$atom of ${\H^1_d(\bR^{n},\Lambda^{\ell})}$ and $\|v_j\|_{BMO^+(\bR^{n})}$ uniformly bounded.
 \end{lem}
 \begin{proof}
 We have seen that it is sufficient to do it for $2-$forms and take the same notations as in the proof of the previous lemma. So we want again to factorize $a_k:=\sum_j \frac{\partial \varphi_k}{\partial x_j} dx_j\wedge dx_k=d\varphi_k\wedge dx_k.$ By assumption, we have the inequality
$$\|\ d \varphi_k\|_{q}\le C\frac {\log(e + r_B)}{|B |^{1-\frac 1q}}.$$
Recall that the equation $d\psi=d\varphi_k$ has a solution in the Sobolev space $W^{1,q}(\R^n, \Lambda^{\ell-1})$  supported in $B$, see again \cite{Sc}. Moreover it satisfies the inequalities
\begin{eqnarray*}
\|\ d \psi\|_{q} &\le &C\frac {\log(e + r_B)}{|B |^{1-\frac 1q}},\\
\|\ \psi\|_{q}&\le&  Cr_B\frac {\log(e + r_B)}{|B |^{1-\frac 1q}}.
\end{eqnarray*}
 We take $G(x):=\log (e+x_k^2)$, which has a bounded norm in $BMO^+$. If we define $u:=d(G^{-1}\psi)$ and $v:=Gdx_k$, it follows from elementary computations that $u\wedge v= a_k$. Both are closed forms. It remains to prove that $u$ satisfies the required estimates (up to a uniform constant) for an $r-$atom of $\H^1$. We develop the exterior derivative and have to consider that the coefficients of $G^{-1}d\psi $ on one hand, $G^{-2}dG\wedge \psi$ on the other hand, are in $L^r(\R^n)$. This is a consequence of H\"older's Inequality, using the elementary inequalities, valid for all $s>1$,
 $$\int_B G(x)^{-s} dx\leq C \frac{|B |} {(\log(e + r_B))^s}\qquad \qquad \int_B G(x)^{-2s} \frac{dx}{1+|x|} \leq C \frac{|B |} {(\log(e + r_B))^{2s}}.$$
\end{proof}
It remains to end the proof of Theorem \ref{main2}.
Let us consider $f\in \H_d^{\log}(\R^n, \Lambda^{\ell+m}),$ with  $1\leq\ell, m\leq n-1.$ Without loss of generality, we assume that its norm is $1$. We write $f=\sum_{\tau}\lambda_\tau \a_{\tau}$, where  $\a_{\tau}=$ are $(\wp, q, s, \ell)_d$-atoms of $\H_d^{\log}(\R^n, \Lambda^{\ell+m})$ related to the balls $B_\tau$. Moreover, 
$$ \sum_{\tau}\theta\left(B_{\tau},\left\|\lambda_\tau\a_{\tau}\right\|_{L^{q}_{\wp}(B_{\tau}, \Lambda^{\ell+m})}\right )\lesssim 1.$$ 
It follows from the properties of growth functions that  
\begin{equation*}
\sum_\tau |\lambda_\tau|\lesssim 1. 
\end{equation*} Each atom is written as a finite sum of products, the number of which depending only of $n, \ell, m$, with the product of norms uniformly bounded. This allows to conclude for the inequality.   
 \end{proof}
 
\begin{remark} Theorem \ref{main2} is not completely satisfactory since the condition 
\begin{equation*}
\sum_k \|u_{k}\|_{\H^1(\R^n, \Lambda^{\ell})}\| v_{k}(\R^n, \Lambda^{m})\|_{BMO^+}\leq C
\end{equation*}
does not imply that the sum $\sum_k u_k\wedge v_k $ is in $\H^{\log}(\R^n, \Lambda^{\ell+m})$ as in the similar factorization of $\H^1$. It only implies that it belongs to the smallest Banach space containing $\H^{\log}(\R^n, \Lambda^{\ell+m})$. \end{remark}

We finish this section by the particular case of scalar products of vector fields. 
\begin{cor}
Let $n\geq 2$ and    $f\in\H^{\log}(\bR^{n})$. Then $f$  can be written as
\begin{equation*}
f=\sum^{\infty}_{k=0}F_{k}\cdot G_{k}\text{ in the sense of distributions}
\end{equation*}
with
$F_{k}\in \H^1(\bR^{n},\R^n)$ and $G_{k}\in BMO(\bR^{n},\R^{n})$ two vector fields  such that one of them is $\div-$free, the other one is $\curl-$free.
Moreover, 
\begin{equation}
\sum_k \|F_{k}\|_{\H^1(\bR^{n},\R^{n})}\| G_{k}\|_{BMO^+(\bR^{n},\R^{n})}\leq C\|f\|_{\H^{\log}(\bR^{n})}.
\end{equation}
\end{cor}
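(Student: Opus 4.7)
The plan is to reduce the corollary to Theorem \ref{main2} in the top degree case, using the standard musical/Hodge identification between vector fields on $\R^n$ and differential forms of degree $1$ or $n-1$. Since the space $\H^{\log}_{d}(\R^n,\Lambda^n)$ coincides with $\H^{\log}(\R^n)$ (a top form $f\,dx_1\wedge\cdots\wedge dx_n$ is automatically closed), any scalar $f\in\H^{\log}(\R^n)$ can be viewed as a closed $n$-form, so Theorem \ref{main2} with $\ell=1$ and $m=n-1$ applies as soon as $n\ge 2$.

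Explicitly, I would write
\begin{equation*}
f\,dx_1\wedge\cdots\wedge dx_n=\sum_{k=0}^{\infty} u_k\wedge v_k
\end{equation*}
with $u_k\in\H^1_d(\R^n,\Lambda^1)$ and $v_k\in BMO_d(\R^n,\Lambda^{n-1})$, obeying the norm control of Theorem \ref{main2}. I then associate to each closed $1$-form $u_k=\sum_j F_k^j\,dx_j$ the vector field $F_k=(F_k^1,\ldots,F_k^n)$, which satisfies $\curl F_k=0$ because $du_k=0$. Similarly, I write $v_k=\sum_j (-1)^{j-1}G_k^j\,dx_1\wedge\cdots\widehat{dx_j}\cdots\wedge dx_n$ and attach to it the vector field $G_k=(G_k^1,\ldots,G_k^n)$; the identity $dv_k=(\div G_k)\,dx_1\wedge\cdots\wedge dx_n$ shows that $\div G_k=0$. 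A short computation gives
\begin{equation*}
u_k\wedge v_k=(F_k\cdot G_k)\,dx_1\wedge\cdots\wedge dx_n,
\end{equation*}
so that, reading off the coefficient of $dx_1\wedge\cdots\wedge dx_n$, the decomposition translates into $f=\sum_k F_k\cdot G_k$ in the sense of distributions.

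Finally I would check the norm inequality. The musical isomorphism is a coordinatewise identification, so $\|F_k\|_{\H^1(\R^n,\R^n)}\sim\|u_k\|_{\H^1(\R^n,\Lambda^1)}$ and $\|G_k\|_{BMO^+(\R^n,\R^n)}\sim\|v_k\|_{BMO^+(\R^n,\Lambda^{n-1})}$, with constants depending only on $n$. The control $\sum_k\|F_k\|_{\H^1}\|G_k\|_{BMO^+}\lesssim\|f\|_{\H^{\log}(\R^n)}$ is then inherited from the analogous inequality provided by Theorem \ref{main2}. There is no serious obstacle here: the whole content of the statement is packed into Theorem \ref{main2}, and the only real work is to verify that the differential constraints $du_k=0$ and $dv_k=0$ correspond exactly to $\curl F_k=0$ and $\div G_k=0$, which is a bookkeeping computation. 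One could equivalently exchange the roles $\ell=n-1$, $m=1$ to make $F_k$ divergence-free and $G_k$ curl-free, as allowed by the statement.
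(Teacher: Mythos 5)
Your proposal is correct and is exactly the intended derivation: the corollary is obtained from Theorem \ref{main2} with $\ell+m=n$ (say $\ell=1$, $m=n-1$) by identifying $f$ with the closed $n$-form $f\,dx_1\wedge\cdots\wedge dx_n$, closed $1$-forms with curl-free fields, closed $(n-1)$-forms with divergence-free fields, and the wedge product with the scalar product. The sign bookkeeping and the norm equivalences you check are precisely the content the paper leaves implicit.
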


{\bf Acknowledgements.} The paper was completed when the fourth author was visiting
to Vietnam Institute for Advanced Study in Mathematics (VIASM). He would like
to thank the VIASM for financial support and hospitality.


\begin{thebibliography}{MTW1}

\bibitem{AMR} P. Auscher, A. McIntosh and E. Russ, Hardy spaces of differential forms on Riemannian manifolds. J. Geom. Anal. 18 (2008), no. 1, 192--248. 


\bibitem{BFG}A. Bonami, J. Feuto and S. Grellier, Endpoint for the div-curl lemma in Hardy spaces.  Publ. Mat. 54 (2010), no. 2, 341--358. 

\bibitem{BG} A. Bonami and S. Grellier, Hankel operators and weak factorization for Hardy-Orlicz spaces.  Colloq. Math. 118 (2010), no. 1, 107--132.

\bibitem{BGK} A. Bonami, S. Grellier and L. D. Ky, Paraproducts and products of functions in $BMO(\mathbb R^{n})$ and $H^{1}(\mathbb R^{n})$ through wavelets.  J. Math. Pures Appl. (9) 97 (2012), no. 3, 230--241.


\bibitem{BIJZ} A. Bonami, T. Iwaniec, P. Jones and M. Zinsmeister, On the product of functions in $BMO$ and $\H^{1}$. Ann. Inst. Fourier (Grenoble) 57 (2007), no. 5, 1405--1439.

\bibitem{BK} A. Bonami and L. D. Ky, Factorization of some Hardy-type spaces of holomorphic functionsn C. R. Math. Acad. Sci. Paris 352 (2014), no. 10, 817--821.

\bibitem{C} R. R. Coifman, A real variable characterization of $H^{p}$. Studia Math. 51 (1974), 269--274.

\bibitem{CCYY} J. Cao, D-C Chang, D. Yang and S. Yang, Riesz transform characterizations of Musielak-Orlicz-Hardy spaces, to appear in Trans. Amer. Math. Soc. or arXiv:1401.7373.

\bibitem{CMS} R. R. Coifman, Y. Meyer and E. M. Stein, Some new function spaces and their applications to harmonic analysis. J. Funct. Anal. 62 (1985), no. 2, 304--335. 

\bibitem{FeS} C. Fefferman and E. M. Stein, $\H^p$ spaces of real variables, Acta Math.  129 (1972), no 1, 137--193.


\bibitem{FS} G. B. Folland and E. M. Stein, Hardy spaces on homogeneous groups, Princeton: Princeton University Press, 1982.

\bibitem{G} J. B. Garnett, Bounded analytic functions. Academic Press, New York (1981).
\bibitem{HYY} S. Hou, D. Yang and S. Yang, Lusin area function and molecular characterizations of Musielak-Orlicz Hardy spaces and their applications. Commun. Contemp. Math. 15 (2013), no. 6, 1350029, 37 pp.

\bibitem{J}  S. Janson, Generalizations of Lipschitz spaces and an application to Hardy spaces and bounded mean oscillation. Duke Math. J. 47 (1980), no. 4, 959--982.

\bibitem{Ky} L. D. Ky, New Hardy spaces of Musielak-Orlicz type and boundedness of sublinear operators. Integral Equations Operator Theory 78 (2014), no. 1, 115--150.

\bibitem{L} R. H. Latter, A characterization of $H^p(\mathbb R^n)$ in terms of atoms. Studia Math. 62 (1978), 93--101.



\bibitem{LHY} Y. Liang, J. Huang and D. Yang,  New real-variable characterizations of Musielak-Orlicz Hardy spaces. J. Math. Anal. Appl. 395 (2012), no. 1, 413--428. 

\bibitem{LY} Y. Liang and D. Yang, Musielak-Orlicz Campanato spaces and applications. J. Math. Anal. Appl. 406 (2013), no. 1, 307--322. 

 
\bibitem{LM1} Z. Lou and A. McIntosh, Hardy space of exact forms on ${\mathbb R}^N$. Trans. Amer. Math. Soc. 357 (2005), no. 4, 1469--1496.


\bibitem{Sc}G. Schwartz, Hodge decomposition-A method for solving boundary value problems, Lecture Notes in Mathematics vol. 1607, Springer-Verlag, Berlin Heidelberg, 1995.

\end{thebibliography}
\end{document}